\title{Local holomorphic Euler characteristic and instanton decay}
\author{E.\ Gasparim \and T.\ K\"{o}ppe \and P.\ Majumdar}
\date{}
\numberwithin{equation}{section}
\newtheorem{theorem}[equation]{Theorem}
\newtheorem{proposition}[equation]{Proposition}
\newtheorem{corollary}[equation]{Corollary}
\newtheorem{lemma}[equation]{Lemma}
\theoremstyle{remark}
\newtheorem{remark}[equation]{Remark}
\theoremstyle{definition}
\newtheorem{definition}[equation]{Definition}
\newtheorem{example}[equation]{Example}
\newtheorem*{prop.mins}{Proposition \ref{prop.mins}}
\newtheorem*{prop.isomodk}{Proposition \ref{isomodk}}
\newtheorem*{prop.localKH}{Proposition \ref{localKH}}
\newtheorem*{corr.inst}{Corollary \ref{inst}}
\newtheorem*{thm.mini}{Theorem \ref{mini}}
\newcommand{\norm}[1]{\left\Vert#1\right\Vert}
\newcommand{\f}{\mathcal{F}}
\newcommand{\loc}{\mathrm{loc}}           
\newcommand{\hodge}{\raisebox{-.75mm}{*}} 
\newcommand{\ce}{\mathrel{\mathop:}=}     
\DeclareMathOperator{\Ext}{Ext}           
\DeclareMathOperator{\Sing}{Sing}         
\DeclareMathOperator{\Tot}{Tot}           
\DeclareMathOperator{\Elm}{Elm}           
\DeclareMathOperator{\coker}{coker}       
\DeclareMathOperator{\gc}{global\ charge}
\DeclareMathOperator{\lc}{local\ charge}
\begin{document}

\maketitle

\begin{abstract}\noindent
We study the local holomorphic Euler characteristic $\chi\bigl(x,
\f\bigr)$ of sheaves near a surface singularity obtained from
contracting a line $\ell$ inside a smooth surface $Z$. We prove
non-existence of sheaves with certain prescribed numerical
invariants. Non-existence of instantons on $Z$ with certain charges
follows, and we conclude that $\ell^2$ poses an obstruction to
instanton decay. A \emph{Macaulay~2} algorithm to compute $\chi$ is
made available at \url{http://www.maths.ed.ac.uk/~s0571100/Instanton/}.
\end{abstract}

\section{Introduction}

Let $\sigma \colon (\widetilde{X}, D) \to (X,x)$ be a resolution of an
isolated quotient singularity. Let $\widetilde\f$ be a reflexive sheaf
on $\widetilde{X}$, set $\f \ce (\sigma_*\widetilde\f)^{\vee\vee}$;
notice that there is an embedding $\sigma_*\widetilde\f \hookrightarrow \f$.
Then the \emph{local holomorphic Euler characteristic} of $\widetilde\f$
at $x$ is defined by
\begin{equation}\label{euler}
  \chi\bigl(x, \widetilde\f\bigr) \ce \chi\bigl((\widetilde{X}, D), \widetilde\f\bigr)
  \ce h^0\bigl(X; \ \f\bigl/\sigma_* \widetilde\f\bigr) + \sum_{i=1}^{n-1}(-1 )^{i-1}
     h^0\bigl(X; R^i \sigma_* \widetilde\f\bigr) \text{ .}
\end{equation}
For the case when $X$ is an orbifold, Blache \cite{Bl} shows that:
\begin{equation}\label{blache}
  \chi\bigl(\widetilde{X}, \widetilde\f\bigr) = \chi\bigl(X, \f\bigr) +
  \sum_{x \in \Sing X} \chi\bigl(x, \widetilde\f\bigr) \text{ .}
\end{equation}

In this paper we consider rational surface  singularities obtained by
contracting a line $\ell \cong \mathbb{P}^1$ with $\ell^2 <-1$ inside a
smooth surface. To calculate $\chi$ locally, it is enough to study
sheaves on a small neighbourhood of the singular point, or on a small
neighbourhood of the exceptional set of a resolution. We therefore
consider the spaces $Z_k \ce \Tot\bigl(\mathcal{O}_{\mathbb{P}^1}(-k)\bigr)$.

We denote by $X_k$ the space obtained from $Z_k$ by contracting the
zero-section $\ell$ to a point, and we let $\pi \colon Z_k \to X_k$ be
the contraction map. Since we are interested in applications to
instantons, we will consider sheaves $E$ over $Z_k$ with
$c_1(E) = 0$. Then $E\vert_{\ell}$ splits by Grothendieck's lemma, and
there exists an integer $j \geq 0$ called the \emph{splitting type} of
$E$ such that $E\vert_{\ell} \cong \mathcal{O}(j) \oplus \mathcal{O}(-j)$.
Set $Z_k^o \ce Z_k - \ell.$
We make two simple observations about reflexive sheaves on $Z_k$.

\begin{prop.mins}
Let $E$ be a rank-$2$ reflexive sheaf on $Z_k$ with splitting type
$\geq k$, then $\chi\bigl(x, E\bigr) \geq k - 1$.
\end{prop.mins}

\begin{prop.isomodk}
Let $E_1$ and $E_2$ be $\mathfrak{sl}(2, \mathbb{C})$-bundles over
$Z_k$ with splitting types $j_1$ and $j_2$, respectively. There exists
an isomorphism $E_1\vert_{Z_k^o} \cong E_2\vert_{Z_k^o}$ if and only
if $j_1 \equiv j_2 \mod k.$ In particular, $E_1$ can decay totally
over $Z_k$ if and only if $j_1 \equiv 0 \mod k$.
\end{prop.isomodk}

This paper consists of applications of the local holomorphic Euler
characteristic to problems of existence and decay of instantons. We
also discuss the Kobayashi--Hitchin correspondence over $Z_k$. We
obtain, via discussion of the physical consequences and an \textit{ad
hoc} definition of stability (Definition \ref{stable}), the following
conclusions:

\begin{prop.localKH}
There is a one-to-one correspondence between framed $SU(2)$-in\-stan\-tons
on $Z_k$ with local charge $n$ and framed-stable $\mathfrak{sl}(2,
\mathbb{C})$-bun\-dles on $Z_k$ with $\chi^{\loc} = n$.
\end{prop.localKH}

\begin{corr.inst}
An $\mathfrak{sl}(2, \mathbb{C})$-bundle over $Z_k$ represents an
instanton if and only if its splitting type is a multiple of $k$.
\end{corr.inst}

\begin{thm.mini}
The minimal local charge of a nontrivial $SU(2)$-instanton on $Z_k$ is
$\chi^{\mathrm{min}}_k = k-1$. The local moduli space of (unframed)
instantons on $Z_k$ having fixed local charge $\chi^{\mathrm{min}}_k$
has dimension $k - 2$.
\end{thm.mini}

\paragraph{Acknowledgements:} 
We are honoured to contribute to this volume commemorating Fedya's
$60^\text{th}$ birthday and are grateful to Ludmil Katzarkov for
giving us this opportunity.

The first author thanks the hospitality and generous support of the
Mathematics Institute of the Universit\"{a}t M\"{u}nster, where part
of this work was carried out. We thank M.\ Stillman for assistance
with \emph{Macaulay 2}.

\section{Elementary background on instantons}

Given a principal $SU(2)$-bundle $P \to X$ over a Riemannian
$4$-manifold $X$ with $c_2(P) = n > 0$, an \emph{$SU(2)$-instanton of
charge $n$ on $X$} is a connection $A$ on $P$ minimising the the
Yang--Mills functional
\[ S_{\mathrm{YM}}(A) \ce \int_X \norm{F_A}^2  \text{ ,} \]
where $F_A$ is the curvature of the connection $A$. The Yang--Mills
equations are the Euler--Lagrange equations corresponding to the
functional $S_{\mathrm{YM}}$.

Being non-linear and of second order, the Yang--Mills equations are
quite difficult to study. Luckily a linearisation can be obtained
easily as follows: The Yang--Mills equation of motion is $D(A) \wedge
F(A) = 0$. But since the Jacobi identity $D(A) \wedge \hodge F(A) = 0$
always holds, any $A$ satisfying $ \hodge F_A = \pm F_A$ 
solves the Yang--Mills equations of motion. A connection $A$ whose
curvature satisfies
\[ \hodge F_A = -F_A \text{ ,} \eqno{(\mathrm{ASD})} \]
is called \emph{anti-self-dual}. Hence, anti-self-dual connections
minimise the Yang--Mills functional. For this reason the ASD equations
may be seen as a ``linear version'' of the Yang--Mills
equations. Subsequently, from the mathematical point of view,
instantons have become synonymous to anti-self-dual connections.

Over a compact K\"{a}hler surface $X$, the Kobayashi--Hitchin
correspondence (see \cite{LT}) provides an interpretation of
irreducible $SU(2)$-instantons of charge~$n$ as stable holomorphic
$SL(2)$-bundles over $X$ with second Chern class $c_2 = n$:
\begin{align}\label{KH}
  \left\{ \parbox{5.5cm}{\centering irreducible $SU(2)$-instantons\\of charge $n$} \right\}
  &\overset{\text{K.--H.}}{\Longleftrightarrow}
  \left\{ \parbox{4cm}{\centering stable $SL(2)$-bundles\\with $c_2 = n$} \right\} \text{ ,}\nonumber \\
  \nabla = \bar\partial + \partial &\longleftrightarrow \bar\partial \text{ .}
\end{align}

In particular, when $X$ is a ruled surface, the informal
interpretation of instantons as weighted, point-like configurations of
concentrated charge has a precise interpretation in terms of jumping
lines: An instanton bundle on a ruled surface has a generic splitting
$\mathcal{O}(a) \oplus \mathcal{O}(-a)$, which is the same for all but
finitely many lines of the ruling, called the jumping lines. In this
interpretation, the weight of the points in the configuration
corresponds to the multiplicity of the jumps, and the topological
charge (=~the second Chern class) is given as the sum of all
multiplicities.

Multiplicities of jumps on lines having $\ell^2 = 0$, i.e.\ on trivial
families $\mathbb{C}P^1 \times \mathbb{C}$, are quite well understood
(cf.\ \cite{BHMM}, \cite{HM}). Here we study the behaviour of
instantons around lines with negative self-intersection $\ell^2 =
-k$. The case $k = 1$ corresponds to the blow-up of a surface at a
point and was studied in several papers, e.g.\ \cite{GO}, \cite{Ga1},
\cite{Kn}. Near a $-k$-line, with $-k < 0$, an instanton has two
independent local numerical invariants: the \emph{height} and the
\emph{width} (see Definition \ref{localcharge}), whose sum gives the
\emph{multiplicity} or \emph{local charge}. The label ``local charge''
comes from the translation into algebraic geometry via the
Kobayashi--Hitchin correspondence:

In the first case, let $\widetilde{X}$ be a compact complex surface
containing a $-1$-line, and let $\pi \colon \widetilde{X} \to X$ be
the blow-down of $\ell$ to a point $x_0 \in X$. If $E$ is a bundle on
$\widetilde{X}$, then the local second Chern class of $E$ near $\ell$
is by definition
\[ c_2^{\loc}\bigl(\ell, E\bigr) \ce c_2\bigl(E\bigr) -
   c_2\bigl((\pi_*E)^{\vee\vee}\bigr) \text{ .} \]
Thus, a local version of the Kobayashi--Hitchin correspondence justifies
the terminology
\begin{align*}
  \left\{ \parbox{3cm}{\centering local charge of an instanton} \right\}
  &\overset{\text{K.--H.}}{\Longleftrightarrow}
  \Bigl\{ \parbox{3.65cm}{\centering local $c_2$ of a bundle} \Bigr\} \text{ .}
\end{align*}
Still near the $-1$-line, an application of Hirzebruch--Riemann--Roch gives 
\begin{equation}\label{riemannroch}
  c_2^{\loc}\bigl(\ell, E\bigr) =  h^0\bigl(X; (\pi_*E)^{\vee\vee}
  \bigl/ \pi_*E\bigr) + h^0\bigl(X; R^1 \pi_*E\bigr) \text{ ,}
\end{equation}
which by \eqref{euler} is the local holomorphic Euler characteristic
of $E$ near $\ell$.

However, the analogue of \eqref{riemannroch} for a bundle near a
$-k$-line, where $k \geq 2$, is a more complicated issue, simply because
contracting such a line produces a singularity; and there exist at
least three nonequivalent definitions of Chern classes for singular
varieties. To avoid carrying this problem over to instantons, it is
more convenient to simply consider the local holomorphic Euler
characteristic, and set:

\begin{definition}\label{localcharge}
Let $E$ be an instanton bundle over a smooth surface $Z$ containing a
$-k$ line $\ell$ and let $\pi\colon Z \rightarrow X$ be map that
contracts of $\ell$ to a point. The \emph{local charge of $E$ around
$\ell$} is:
\[ \chi^{\loc}\bigl(E\bigr) = \chi\bigl(\ell, E\bigr)
   \ce h^0\bigl(X; (\pi_*E)^{\vee\vee} \bigl/ \pi_* E \bigr)
      + h^0\bigl(X; R^1 \pi_* E\bigr) \text{ .} \]
The right hand side contains two independent holomorphic invariants:

\noindent $\mathbf{w}(E) \ce h^0\bigl(X; (\pi_*E)^{\vee\vee} \bigl/
\pi_* E \bigr)$ is called the \emph{width} of the instanton and
measures how far the direct image is from being a bundle;

\noindent $\mathbf{h}(E) \ce h^0\bigl(X; R^1 \pi_* E\bigr)$ is called
the \emph{height} of the instanton and measures how far the bundle is
from being a split extension.
\end{definition}

From equation \eqref{blache}, $\chi\bigl(E\bigr) =
\chi^{\loc}\bigl(E\bigr) + \chi\bigl((\pi_*E)^{\vee\vee}\bigr)$, so we
can say that the local charge measures the loss of total charge
suffered by contracting $\ell$.

\section{Holomorphic surgery and instanton decay}

We first describe informally the ideas behind holomorphic surgery and
decay and then give the precise definitions. A \emph{decay} of an
instanton is a transformation that lowers the total charge; a
\emph{local decay around $\ell$} is a transformation that keeps the
instanton fixed outside $\ell$ but lowers the local charge near
$\ell$, and consequently lowers the global charge as well.

When an instanton is represented by a holomorphic bundle $E,$ then
holomorphic surgery provides a precise way to obtain local decay: If
the surface $Z$ contains a line $\ell$ and $N = N(\ell)$ is a small
neighbourhood of $\ell$ in the analytic topology, then lowering the
charge of $E$ around $\ell$ means to replace $E\vert_N$ by some
$E'\vert_N$ with smaller local charge, while keeping $E$ fixed on $Z -
\ell$. The outcome is a new holomorphic bundle which is isomorphic to
$E$ over $Z-\ell$, but which has smaller $c_2$.

\begin{remark}
Note that holomorphic surgery takes one instanton bundle to another
instanton bundle; that is, the surgery process keeps $c_1=0$, and
consequently differs from the more familiar process of elementary
transformations (which does not move between instantons).
\end{remark}

\begin{definition}
Two instanton bundles $E_1$ and $E_2$ on $X$ that are related by
holomorphic surgery around $\ell$ must satisfy $E_1\vert_{X - \ell}
\cong E_2\vert_{X - \ell}$ holomorphically.
 If in addition $c_2(E_1) > c_2(E_2)$, we will say that
\emph{$E_1$ decays to $E_2$}. 
\end{definition}

We fix a compact surface $Z$ containing a line $\ell$ with $\ell^2 =
-k$, as in the introduction, with $\pi \colon Z \to X$ the contraction
of $\ell$, and a decomposition $Z = (Z-\ell) \cup N(\ell)$. We will
see in Proposition \ref{kobhitchin} that instantons correspond to
bundles that are trivial on $Z^o \ce N(\ell) - \ell = (Z-\ell) \cap
N(\ell)$. So, instanton bundles can be given frames on $Z^o$. We will
use theses frames (see definition \ref{frame} below) in our
constructions. Note that $(Z_k, Z_k^o)$ describes the local situation,
whereas $(Z, Z^o)$ describes the global situation on a compact
manifold.
(Although it is not true that any $2$-dimensional tubular
neighbourhood of a $-k$-line $\ell$ is biholomorphic to $Z_k$, it is a
consequence of \cite{Ga2} that holomorphic bundles on both $N(\ell)$
and $Z_k$ are completely determined by a finite infinitesimal
neighbourhood of $\ell$, so that for the purposes of holomorphic
surgery and instanton decay, $N(\ell)$ and $Z_k$ can be identified.)

\begin{definition}\label{frame}
Let $\pi_F \colon F \to Z$ be a bundle over $Z$ that is trivial over
$Z^{o} \ce Z - Y$. Given two pairs $f =(f_1, f_2) \colon Z^{o} \to
\pi_{F}^{-1}(Z^{o})$ and $g =(g_1, g_2) \colon Z^{o} \to
\pi_{F}^{-1}(Z^{o})$ of fibrewise linearly independent holomorphic
sections of $F\vert_{Z^{o}}$, we say that $f$ is \emph{equivalent} to
$g$ (written $f \sim g$) if $\phi \ce g \circ f^{-1} \colon
V\rvert_{Z^o} \to V \vert_{Z^o}$ extends to a holomorphic map $\phi
\colon F \to F$ over the entire $Z$. A \emph{frame} of $F$ over $Z^o$
is an equivalence class of fibrewise linearly independent holomorphic
sections of $F$ over $Z^o$.



\begin{itemize}
\item A \emph{framed} bundle $\bar E^f$ on $Z$ is a pair consisting of
      a bundle $\pi_{\bar E} \colon \bar E \to Z$ together with
      a frame of $\bar E$ over $Z^o \ce N(\ell) - \ell$.
\item A \emph{framed} bundle $V^f$ on $Z_k \ce
      \Tot\bigl(\mathcal{O}_{\mathbb{P}^1}(-k)\bigr)$ is a pair
      consisting of a bundle $\pi_V \colon V \to Z_k$ together with a
      frame of $V$ over $Z^o_k$.
\item A \emph{framed} bundle $E^f$ on $X$ is a pair consisting of a
      bundle $\pi_E \colon E \to X$ together with a frame of $E$ over
      $N(x) -\{x\}$, where $N(x)$ is a small disk neighbourhood of $x$.
      We will always take $N(x) \ce \pi\bigl(N(\ell)\bigr)$.
\end{itemize}
\end{definition}

\begin{proposition}\label{glue}
An isomorphism class $[\bar E^f]$ of a framed bundle on $Z$ is
uniquely determined by a pair of isomorphism classes of framed bundles
$[E^f]$ on $X$ and $[V^f]$ on $Z_k$. We write $\bar{E}^f = (E^f, V^f)$.
\end{proposition}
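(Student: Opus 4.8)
The plan is to exhibit explicit, mutually inverse constructions between isomorphism classes of framed bundles on $Z$ and pairs $([E^f],[V^f])$ of isomorphism classes of framed bundles on $X$ and on $Z_k$, all organised around the open cover $Z=(Z-\ell)\cup N(\ell)$ with overlap $Z^o$. Three geometric facts do the work: $\pi$ restricts to a biholomorphism $Z-\ell\xrightarrow{\sim}X-\{x\}$ carrying $Z^o$ onto $N(x)-\{x\}$; by \cite{Ga2} bundles on $N(\ell)$ correspond to bundles on $Z_k$ (both being determined by a finite infinitesimal neighbourhood of $\ell$), compatibly with frames over $Z^o=Z_k^o$; and $\{Z-\ell,\,N(\ell)\}$ being an open cover, a bundle on $Z$ together with a trivialisation over $Z^o$ is the same datum as its two restrictions plus the gluing isomorphism over $Z^o$ expressed in that trivialisation.

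\textbf{Restriction.} Given $\bar E^f=(\bar E,[f])$ on $Z$, the frame $f$ is a trivialisation of $\bar E\vert_{Z^o}$. Transport $\bar E\vert_{Z-\ell}$ to $X-\{x\}$ via $\pi$ and glue in the trivial bundle on $N(x)$ along $f$ over $N(x)-\{x\}$; this produces a bundle $E$ on $X$ for which $f$ is a frame over $N(x)-\{x\}$, hence a framed bundle $E^f$. Likewise $\bar E\vert_{N(\ell)}$ corresponds under \cite{Ga2} to a bundle $V$ on $Z_k$, for which $f$ is a frame over $Z_k^o=Z^o$, giving $V^f$. A bundle isomorphism $\bar E\to\bar E'$ carrying $[f]$ to $[f']$ restricts to isomorphisms of the associated framed bundles over $X$ and over $Z_k$, so $[\bar E^f]\mapsto([E^f],[V^f])$ is well defined.

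\textbf{Gluing.} Conversely, given $E^f=(E,[g])$ on $X$ and $V^f=(V,[h])$ on $Z_k$, the frames $g$ and $h$ trivialise $E\vert_{N(x)-\{x\}}$ and $V\vert_{Z_k^o}$; gluing $E\vert_{X-\{x\}}$ (transported to $Z-\ell$) to $V$ (transported to $N(\ell)$) over $Z^o$ via the transition $h\circ g^{-1}$ yields a bundle $\bar E$ on $Z$ together with the common trivialisation as a frame over $Z^o$, hence $\bar E^f$. Replacing $g$ by an equivalent frame changes the transition by the restriction to $Z-\ell$ of an automorphism of $E$ over $X$; replacing $h$ similarly involves an automorphism of $V$ over $Z_k$; replacing $E$ or $V$ by an isomorphic framed bundle acts the same way. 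In each case the resulting framed bundle on $Z$ changes only by an isomorphism, so $([E^f],[V^f])\mapsto[\bar E^f]$ is well defined.

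\textbf{Inverseness and the main point.} One then checks the two maps are mutually inverse. Starting from $([E^f],[V^f])$: the glued bundle, restricted to $X-\{x\}$ and re-extended across $x$ along its frame, returns $E$ --- here one uses that the frame extends, by a Hartogs argument on the normal surface germ $(X,x)$, to an isomorphism over all of $N(x)$, so that $E$ is in fact trivial near $x$ and is recovered --- while its restriction to $N(\ell)\cong Z_k$ returns $V$. Starting from $\bar E^f$ and gluing back: the two frames coincide, so the transition is $f\circ f^{-1}=\mathrm{id}$, which is exactly how $\bar E$ is reassembled from its restrictions. The step demanding most care is the compatibility of the \emph{three} frame equivalence relations: one must verify that an automorphism of the trivial bundle over $Z^o$ extends to an automorphism of $\bar E$ over all of $Z$ precisely when it is induced by an automorphism of $E$ over $X$ together with one of $V$ over $Z_k$ that agree on $Z^o$ --- a Mayer--Vietoris patching statement for the automorphism sheaves attached to the cover $Z=(Z-\ell)\cup N(\ell)$. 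This, together with the observation that the frame-determined extension across $x$ --- rather than the reflexive hull $(\pi_*\bar E)^{\vee\vee}$, which on a singular $X$ need not be locally free --- is the correct canonical choice, is the technical heart of the argument; everything else is routine bookkeeping of the gluing along $Z^o$.
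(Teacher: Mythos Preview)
Your argument is correct and follows the same gluing-over-$Z^o$ strategy as the paper, with essentially the same ingredients: the biholomorphism $Z-\ell\cong X-\{x\}$, the identification $N(\ell)\cong Z_k$ from \cite{Ga2}, and a patching check that framed isomorphisms on the two pieces glue to a framed isomorphism on $Z$.

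The one place you diverge is the extension across $x$. You glue in the trivial bundle via the frame and explicitly warn against using $(\pi_*\bar E)^{\vee\vee}$, on the grounds that on the singular $X$ it ``need not be locally free''. But the paper does take $E=(\pi_*\bar E)^{\vee\vee}$, and this works: since $\bar E\vert_{Z^o}$ is trivial, $\pi_*\bar E$ is trivial on $N(x)-\{x\}$, and on the normal surface germ $(X,x)$ the reflexive hull of a sheaf that is trivial off a point is the trivial bundle --- precisely by the Hartogs-type extension you yourself invoke, packaged in the paper as \cite[Prop.~1.6]{Ha2}. So your frame-determined extension and the paper's double dual coincide, and your caveat is unnecessary. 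Organisationally you are more explicit about exhibiting both maps and checking they are mutually inverse, while the paper concentrates its diagram-chasing on the well-definedness of the gluing direction; these amount to the same verification.
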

\begin{proof}
One needs to observe that any reflexive sheaf on $X$ is
completely determined by the complement of the point $x$,
cf.\ \cite[Prop.\ 1.6]{Ha2}. Hence $E = (\pi_*\bar{E})^{\vee\vee}$ is
trivial on $N(x)$ and is completely determined by $E\vert_{X-\{x\}}$.
The rest of the proof is just a verification that the framings have
been conveniently defined.

The contraction map gives an isomorphism $i_1 \colon Z-\ell \to
X - \{x\}$. Based on \cite{Ga2} we may assume that there is an
isomorphism $i_2 \colon N(\ell) \to Z_k$. These induce
isomorphisms on the deleted neighbourhoods
\[ N(x) - \{x\} \xleftarrow{\ i_1 \ } Z^o \xrightarrow{\ i_2 \ } Z^o_k \text{ .} \]

By construction, $\bar{E} = E \sqcup_{(s_1, s_2) = (t_1, t_2)} V$ is
made by identifying the bundles as well as the sections over $Z^o$, so
that the bundles satisfy $\bar{E}\vert_{Z^o} = i_1^*
(E\vert_{{N(x)}-\{x\}}) = i_2^*(V\vert_{Z^o_k})$, and the framing
$(f_1, f_2)$ of $\bar{E}$ satisfies $(f_1, f_2)= (s_1 ,s_2) \circ i_1
= (t_1, t_2) \circ i_2$.

Let $\phi \colon E^f \to {E'}^f$ be an isomorphism such that $\phi
\circ (s_1, s_2) = (s'_1, s'_2)$, and let $\xi \colon V^f \to {V'}^f$
be an isomorphism such that $\xi \circ (t_1, t_2) = (t'_1, t'_2)$. We
have the following diagram of bundle maps:
\[\begin{CD}
  {\bar{E}^f\vert_{Z-\ell}} @>>> {E^f} @>{\phi}>> {E'^f} 
@<<< {\bar{E}'^f\vert_{Z-\ell}} \\
  @V{\pi_{\bar{E}}}VV @V{\pi_E}VV @VV{\pi_{E'}}V @VV{\pi_{\bar{E}'}}V \\
  {Z - \ell} @>>i_1{}> {X-\{x\}} @= {X-\{x\}} @<<{i_1}< {Z-\ell} 
\end{CD} \]
Hence, 
\begin{equation}\label{iso1}
  \bar{E}'^f\vert_{Z-\ell} = i_1^*(E'^f\vert_{X-\{x\}}) = i_1^* \circ \phi \,(E^f\vert_{X-\{x\}})
  = i_1^* \circ \phi \circ i_{1*} \,(\bar{E}^f\vert_{Z-\ell}) \text{ ,}
\end{equation}
showing that $i_1^* \circ \phi \circ i_{1*}$ is an isomorphism of
$\bar{E}$ and $\bar{E'}$ over $Z-\ell$ such that
\begin{equation}\label{iso2}
  \phi \circ (f_1, f_2) = \phi \circ (s_1, s_2) \circ i_1 = (s'_1, s'_2) \circ i_1 = (f'_1, f'_2) \text{ .}
\end{equation}

On the other hand we have a second diagram of bundle maps:
\[ \begin{CD}
  {\bar{E}^f\vert_{N(\ell)}} @>>> {V^f} @>{\xi}>> {V'^f} @<<< {\bar{E}'^f\vert_{N(\ell)}} \\
  @V{\pi_{\bar{E}}}VV @V{\pi_V}VV @VV{\pi_{V'}}V @VV{\pi_{\bar{V}'}}V \\
  {N(\ell)} @>>{i_2}> {Z_k} @= {Z_k} @<<{i_2}< {N(\ell)}
\end{CD} \]
 Therefore,
\begin{equation}\label{iso3}
  \bar{E}'^f\vert_{N(\ell)} = i_2^*({V}'^f\vert_{Z_k}) = i_2^* \circ \xi \,(V^f\vert_{Z_k})
  = i_2^* \circ \xi \circ i_{2*}\, (\bar{E}^f\vert_{N(\ell)}) \text{ ,}
\end{equation}
showing that $i_2^*\circ \xi \circ i_{2*}$ is an isomorphism of
$\bar{E}$ and $\bar{E'}$ over $N(\ell)$ such that
\begin{equation}\label{iso4}
  \xi \circ (f_1, f_2) = \xi \circ (t_1, t_2) \circ i_2 = (t'_1, t'_2) \circ i_2 = (f'_1, f'_2) \text{ .}
\end{equation}

These isomorphisms agree over the intersection $Z^o$. In fact, by
\eqref{iso1} and \eqref{iso3},
\begin{multline*}
  i_1^* \circ \phi \circ i_{1*} \,(\bar{E}^f\vert_{Z^o}) = i_1^* \circ \phi \,(E^f\vert_{N(x)})
  = i_1^*(E'^f\vert_{N(x)}) \\ = i_2^* (V'^f\vert_{Z_k-\{0\}}) = i_2^* \circ \xi \,(V^f\vert_{Z_k-\{0\}})
  = i_2^* \circ \xi \circ i_{2*} \,(\bar{E}^f\vert_{Z^o}) \text{ ,}
\end{multline*}
and moreover they also preserve the framings over the intersection,
since over $Z^o$ we have, by \eqref{iso2} and \eqref{iso4},
\[ \phi \circ (f_1, f_2) = (f'_1, f'_2) = \xi \circ (f_1, f_2) \text{ .} \]
By the gluing lemma this gives an isomorphism over the entire space
$\bar{X}$, and we get $\bar{E}' \simeq \bar{E}$.
\end{proof}




Note: Here we have only defined surgery for framed bundles on
surfaces. A similar definition of holomorphic surgery can be given in
much greater generality for decorated bundles on higher-dimensional
varieties; for instance, a broader sense of framing can be used by
fixing the isomorphism type of the bundles on a subvariety.

Using \eqref{KH} we can re-state Proposition \ref{glue} in terms of
instantons:

\begin{proposition}\label{patching}
If \ $\nabla$ and $\nabla'$ are instantons on $Z$, with $\nabla'$
obtained from $\nabla$ by local decay, then
\[ \gc(\nabla') = \gc(\nabla) - \lc(\nabla,\ell) \text{ .} \]
\end{proposition}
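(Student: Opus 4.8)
The plan is to translate the statement into the language of holomorphic bundles via the Kobayashi--Hitchin correspondence \eqref{KH} and then invoke Proposition \ref{glue} together with Blache's formula \eqref{blache} applied to the local model. First I would let $E$ and $E'$ be the holomorphic $SL(2)$-bundles on $Z$ corresponding to $\nabla$ and $\nabla'$ under K.--H. Since $\nabla'$ is obtained from $\nabla$ by local decay around $\ell$, by definition of holomorphic surgery these bundles satisfy $E\vert_{Z-\ell} \cong E'\vert_{Z-\ell}$; equipping them with the induced frames over $Z^o$, Proposition \ref{glue} writes $E^f = (A^f, V^f)$ and $E'^f = (A^f, {V'}^f)$, where the bundle $A^f$ on $X$ is the common piece (it is unchanged because the surgery fixes $E$ away from $\ell$, hence the reflexive direct image $(\pi_*E)^{\vee\vee} = (\pi_*E')^{\vee\vee}$ is the same) and $V^f$, ${V'}^f$ are the framed bundles on the local model $Z_k$.

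Next I would apply Blache's identity \eqref{blache}, which in this surface setting reads $c_2(E) = c_2\bigl((\pi_*E)^{\vee\vee}\bigr) + \chi^{\loc}(E)$ (this is exactly the remark following Definition \ref{localcharge}), and likewise for $E'$. Subtracting the two identities and using that the reflexive direct images agree, I get $c_2(E) - c_2(E') = \chi^{\loc}(E) - \chi^{\loc}(E')$. By the local decay hypothesis $\chi^{\loc}(E') < \chi^{\loc}(E)$, and in fact the surgery producing $\nabla'$ is set up so that the resulting local charge near $\ell$ is precisely $\chi^{\loc}(E) - \mathrm{lc}(\nabla,\ell)$ — i.e.\ $\mathrm{lc}(\nabla,\ell)$ is by construction the drop in local charge. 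Since $c_2 = \gc$ and $\chi^{\loc} = \mathrm{lc}$ under the correspondence, this gives $\gc(\nabla') = \gc(\nabla) - \mathrm{lc}(\nabla,\ell)$, as claimed.

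The step I expect to be the main obstacle is pinning down precisely what ``$\mathrm{lc}(\nabla,\ell)$'' refers to on the left-hand side and checking it genuinely equals the difference $\chi^{\loc}(E) - \chi^{\loc}(E')$ rather than merely bounding it. This requires being careful that the surgery replaces $V^f$ by a framed bundle ${V'}^f$ on $Z_k$ whose local holomorphic Euler characteristic is exactly the target value, and that no charge is gained or lost in the gluing — which is where Proposition \ref{glue} is essential, since it guarantees the global bundle $E'$ is reconstructed uniquely from $(A^f, {V'}^f)$ with no hidden contribution to $c_2$ from the gluing region $Z^o$ (the frames match there by hypothesis). Once that bookkeeping is in place the computation is immediate; I would present it as a short chain of equalities:
\[
  \gc(\nabla') = c_2(E') = c_2\bigl((\pi_*E')^{\vee\vee}\bigr) + \chi^{\loc}(E')
              = c_2\bigl((\pi_*E)^{\vee\vee}\bigr) + \chi^{\loc}(E) - \lc(\nabla,\ell)
              = \gc(\nabla) - \lc(\nabla,\ell) \text{ ,}
\]
with each equality justified respectively by K.--H., Blache's formula \eqref{blache}, Proposition \ref{glue} and the definition of local decay, and again Blache's formula together with K.--H.
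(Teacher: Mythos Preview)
Your approach is exactly the paper's: the proof there reads in its entirety ``Just combine Proposition \ref{glue} and equality \eqref{blache},'' and your elaboration unpacks precisely that combination. Your one hesitation --- the meaning of $\lc(\nabla,\ell)$ --- is resolved by Definition \ref{localcharge}: it is simply $\chi^{\loc}(E)$, so the proposition is implicitly about \emph{total} local decay (i.e.\ $\chi^{\loc}(E')=0$), and your chain of equalities then goes through with $\chi^{\loc}(E') = 0$ rather than treating $\lc(\nabla,\ell)$ as a generic drop.
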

\begin{proof}
Just combine Proposition \ref{glue} and equality \eqref{blache}.
\end{proof}

\section{When is total decay near \texorpdfstring{$\bm\ell$}{l} possible?}

Consider the questions: Can every bundle decay totally around
$\ell$, that is, is every bundle related by holomorphic surgery to a
bundle that is trivial around $\ell$? Can decay by $1$ always happen,
that is can every charge $n$ instanton decay locally to charge $n-1$ ?
In the case $k = 1$ the answers are ``yes'', but for $k > 2$ we will
show that the answers to both questions are ``no''.

In this section we use the well-known concept of \emph{elementary
transformations} of Maru\-yama \cite{M}, which we now recall: Let $E$
be a vector bundle over an algebraic variety $W$. Choose a line bundle
$L$ over a Cartier divisor $D \subset W$ and a surjection $r \colon E
\to L$ induced by a surjection $\rho \colon E\vert_D \to L$. Set $E'
\ce \ker(r)$ and $L' \ce \ker(\rho)$. Since $D$ is a Cartier divisor,
$E'$ is a vector bundle on $W$. By definition $E'$ is called the
vector bundle obtained from $E$ by making the \emph{elementary
transformation} induced by $r$, denoted
\[ E' = \Elm_L(E) \text{ .} \]
The following diagram, called the \emph{display} of the 
elementary transformation, clarifies the situation:
\[ \begin{CD}
  & & {0} & & {0} \\
  @. @AAA @AAA \\
  {0} @>>> {L'} @>>> {E\vert_D} @>{\rho}>> {L} @>>> {0} \\
  @. @A{t}AA @AAA @| \\
  {0} @>>> {E'} @>>> {E} @>>{r}> {L} @>>> {0} \\
  @. @AAA @AAA \\
  & & {E(-D)} @= {E(-D)} \\
  @. @AAA @AAA \\
  & & {0} & & {0}
\end{CD} \]
Note that the elementary transformation does not change $E$ outside
the divisor, that is, $E\vert_{W-D} \cong E'\vert_{W-D}$.

\begin{proposition}\label{isomodk}
Let $E_1$ and $E_2$ be $\mathfrak{sl}(2, \mathbb{C})$-bundles over
$Z_k$ with splitting types $j_1$ and $j_2$, respectively. There exists
an isomorphism $E_1\vert_{Z_k^o} \cong E_2\vert_{Z_k^o}$ if and only
if $j_1 \equiv j_2 \mod k$. In particular, $E_1$ can decay totally
over $Z_k$ if and only if $j_1 \equiv 0 \mod k$.
\end{proposition}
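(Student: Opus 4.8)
The plan is to descend everything to the standard two-chart model of $Z_k$ and to exploit that deleting $\ell$ enlarges the gauge group by Laurent polynomials in the fibre coordinate. Cover $Z_k$ by $U\cong\mathbb C^{2}_{z,u}$ and $V\cong\mathbb C^{2}_{\xi,v}$ with $\xi=z^{-1}$, $v=z^{k}u$, so that $\ell=\{u=0\}=\{v=0\}$ and $Z_k^{o}$ is covered by $U^{o}=\{u\neq0\}$, $V^{o}=\{v\neq0\}$. By Grothendieck's lemma together with the explicit description of bundles on $Z_k$ in \cite{Ga2}, an $\mathfrak{sl}(2,\mathbb C)$-bundle $E$ of splitting type $j$ is presented by a transition matrix $\bigl(\begin{smallmatrix}z^{j}&p\\0&z^{-j}\end{smallmatrix}\bigr)$ from $U$ to $V$ with $p$ vanishing along $\ell$, and two such present isomorphic bundles over $Z_k$ (respectively over $Z_k^{o}$) exactly when they differ by $B\,(\cdot)\,A^{-1}$ with $A$ holomorphic and invertible on $U$ and $B$ on $V$ (respectively on $U^{o}$ and $V^{o}$, where now Laurent polynomials in $u$, resp.\ $v$, are allowed). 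Write $E^{\mathrm{spl}}_{j_0}$ for the split $\mathfrak{sl}(2)$-bundle of splitting type $j_0$, the one with transition $\operatorname{diag}(z^{j_0},z^{-j_0})$.

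The engine of the argument is that over $Z_k^{o}$ the cocycle $\operatorname{diag}(z^{k},z^{-k})$ is a coboundary: $z^{k}=v/u$ with $u\in\mathcal O(U^{o})^{\times}$ and $v\in\mathcal O(V^{o})^{\times}$, equivalently $\mathcal O_{Z_k}(\ell)|_{Z_k^{o}}\cong\mathcal O$, and $\operatorname{Pic}(Z_k^{o})\cong\mathbb Z/k$ (likewise $\pi_1(Z_k^{o})\cong\mathbb Z/k$). Hence, conjugating the transition matrix of $E|_{Z_k^{o}}$ by $\operatorname{diag}(u^{-m},u^{m})$ on $U^{o}$ and $\operatorname{diag}(v^{-m},v^{m})$ on $V^{o}$ yields an isomorphic bundle with diagonal part $\operatorname{diag}(z^{\,j-mk},z^{-j+mk})$ and off-diagonal entry $z^{-mk}u^{-2m}p$. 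Choosing $m$ with $0\le j-mk<k$, one brings $E|_{Z_k^{o}}$ to a transition matrix with diagonal $\operatorname{diag}(z^{j_0},z^{-j_0})$, $j_0\equiv j\pmod k$; on $Z_k$ itself this same move is a composite elementary transformation along $\ell$, and it is here that $\ell^2=-k$ enters — it is the reason the shift of splitting type is by exactly $k$.

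For the ``if'' direction one is then left to clear the residual off-diagonal entry over $Z_k^{o}$, i.e.\ to show $z^{-mk}u^{-2m}p\in z^{j_0}\mathcal O(U^{o})+z^{-j_0}\mathcal O(V^{o})$, so that $E|_{Z_k^{o}}\cong E^{\mathrm{spl}}_{j_0}\big|_{Z_k^{o}}$; if $j_1\equiv j_2\pmod k$ then both $E_i|_{Z_k^{o}}$ are isomorphic to $E^{\mathrm{spl}}_{j_0}\big|_{Z_k^{o}}$ for the common residue $j_0$, hence to each other. For the ``only if'' direction I would read the residue off $Z_k^{o}$ directly: extend $E|_{Z_k^{o}}$ reflexively across the singular point $x\in X_k$ (as in the proof of Proposition~\ref{glue}) and take the isotropy representation of $\pi_1(Z_k^{o})\cong\mathbb Z/k$ on the fibre over $x$; this is an isomorphism invariant of $E|_{Z_k^{o}}$, it is determined by an infinitesimal neighbourhood of $\ell$, and for splitting type $j$ it agrees with the isotropy of $E^{\mathrm{spl}}_{j}$, namely $\{\,j,-j\,\}\bmod k$, so $E_1|_{Z_k^{o}}\cong E_2|_{Z_k^{o}}$ forces $j_1\equiv j_2\pmod k$. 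The closing sentence is the case $j_2=0$: by Proposition~\ref{glue}, $E_1$ decays totally over $Z_k$ precisely when $E_1|_{Z_k^{o}}\cong\mathcal O_{Z_k^{o}}^{2}$, which by the equivalence holds iff $j_1\equiv0\pmod k$.

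The step I expect to be the real obstacle is the clearing of the off-diagonal entry in the ``if'' direction — equivalently, showing that $E|_{Z_k^{o}}$ acquires a line sub-bundle complementary to its maximal one, so that the extension splits after the monomial rescaling above. This is a finite but careful bookkeeping, matching the exponent ranges permitted by the normal form of \cite{Ga2} against the Laurent freedom in $u$ released by the coboundary $\operatorname{diag}(z^{k},z^{-k})$; carried out over $Z_k$ rather than over $Z_k^{o}$ it is the same circle of computations that yields the bound $\chi(x,E)\ge k-1$ of Proposition~\ref{prop.mins}, which is exactly why a splitting type that is not a multiple of $k$ poses an obstruction to decay.
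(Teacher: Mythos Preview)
Your reduction of the splitting type modulo $k$ and your appeal to $\pi_1(Z_k^o)\cong\mathbb Z/k$ for the ``only if'' direction are precisely the ingredients the paper uses. The substantive difference is \emph{where} you perform the reduction. You gauge-transform on $Z_k^o$ by $\operatorname{diag}(u^{-m},u^{m})$ and $\operatorname{diag}(v^{-m},v^{m})$; the paper instead works on all of $Z_k$, applying two elementary transformations along $\ell$ followed by a twist by $\mathcal O_\ell(-k)$ (which is trivial on $Z_k^o$, since $u$ is a nowhere-vanishing section there). That choice is exactly what dissolves the obstacle you flag.

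Because the paper's operation $\Phi=\otimes\,\mathcal O_\ell(-k)\circ\Elm_{\mathcal O_\ell(j+k)}\circ\Elm_{\mathcal O_\ell(j)}$ and its inverse live on $Z_k$, the reduced object is a genuine $\mathfrak{sl}(2,\mathbb C)$-bundle on $Z_k$ with splitting type $j_0<k$, not merely a cocycle on $Z_k^o$. At that point no off-diagonal entry needs to be cleared: by \cite[Thm.~3.3]{Ga2} (the fact invoked later in Lemma~\ref{jlessk}), every bundle on $Z_k$ with splitting type $j_0<k$ is already split, so its restriction to $Z_k^o$ is $\bigl(\mathcal O(j_0)\oplus\mathcal O(-j_0)\bigr)\big|_{Z_k^o}$ and the ``if'' direction follows immediately. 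By contrast, your conjugation produces an off-diagonal term $z^{-mk}u^{-2m}p$ whose $u$-exponent can be negative; the resulting transition matrix is no longer the restriction of a bundle on $Z_k$, so \cite{Ga2} does not apply and you are genuinely forced into the monomial bookkeeping you anticipate. That computation can be done, but the elementary-transformation route makes it unnecessary. Your ``only if'' argument via the isotropy representation is a more elaborate packaging of the same $\pi_1$-obstruction the paper states in one line.
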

\begin{proof}
We first claim that the bundle $\mathcal{O}_{\ell}(-k)$ is trivial on
$Z_k^o$. In fact, if $u=0$ is the equation of $\ell$, then $s(z, u) =
u$ determines a section of $\mathcal{O}_{\ell}(-k)$ that does not
vanish on $Z_k^o$.

If a bundle $E$ over $Z_k$ has splitting type $j$, then by definition,
$E\vert_{\ell} \cong \mathcal{O}_{\ell}(-j) \oplus
\mathcal{O}_{\ell}(j)$.  So there is a surjection $\rho \colon
E\vert_\ell \to \mathcal{O}_{\ell}(j)$. The bundle $E' =
\Elm_{\mathcal{O}_{\ell}(j)}(E)$ splits over $\ell$ as
$\mathcal{O}_{\ell}(-j) \oplus \mathcal{O}_{\ell}(j+k)$. Therefore we
can use the surjection $\rho \colon E'\vert_\ell \to
\mathcal{O}_{\ell}(j+k)$ to perform a second elementary
transformation, and we obtain bundle $E'' =
\Elm_{\mathcal{O}_{\ell}(j+k)}(E')$, which splits over $\ell$ as
$\mathcal{O}_{\ell}(-j) \oplus \mathcal{O}_{\ell}(j+2k)$ and has first
Chern class $2k$. Tensoring by $\mathcal{O}_{\ell}(-k)$ we get back to
an $\mathfrak{sl}(2, \mathbb{C})$-bundle with splitting type
$j+k$. Hence, the transformation
\[ \Phi(E) = \otimes \,\mathcal{O}_{\ell}(-k) \;\circ\; \Elm_{{\cal O}_{\ell}(j+k)}
   \;\circ\; \Elm_{{\cal O}_{\ell}(j)} (E) \]
increases the splitting type by $k$ while keeping the isomorphism type
of $E$ over $Z_k^o$. So we need only to analyse bundles with splitting
type $j < k$.

If $j = 0$, the bundle is globally trivial on $Z_k$. If $j \neq 0$,
then $E\vert_{Z^o_k}$ induces a non-zero element on the fundamental
group $\pi_1(Z_k^o) = \mathbb{Z}\bigl/k\mathbb{Z}$.
\end{proof}

One interesting consequence of Proposition \ref{isomodk} is that
instantons do not correspond to bundles whose splitting type does not
divide $k$. Consequently, using the results of the following two
sections, we will deduce:

\begin{corollary}
The self-intersection number of $\ell$ provides an obstruction to the
existence of instantons with certain prescribed numerical 
invariants.
\end{corollary}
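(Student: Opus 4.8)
The plan is to deduce the corollary by combining the structural description of $\mathfrak{sl}(2,\mathbb{C})$-bundles over $Z_k$ with the framed local Kobayashi--Hitchin correspondence: I would show that on $Z_k$ no $SU(2)$-instanton has local charge $n$ for $1 \le n \le k-2$, so that the value $k = -\ell^2$ genuinely forbids the numerical invariant $\chi^{\loc} = n$ for those $n$, and that this bound is sharp.

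First I would invoke the local Kobayashi--Hitchin correspondence (Proposition~\ref{localKH}): an $SU(2)$-instanton on $Z_k$ with local charge $n$ exists if and only if there is a framed-stable $\mathfrak{sl}(2,\mathbb{C})$-bundle $E$ on $Z_k$ with $\chi^{\loc}(E) = n$. By Corollary~\ref{inst}, such an $E$ has splitting type $j$ a multiple of $k$, so either $j = 0$ or $j \ge k$. If $j = 0$, then as in the last line of the proof of Proposition~\ref{isomodk} the bundle $E$ is globally trivial on $Z_k$ and $\chi^{\loc}(E) = 0$. If $j \ge k$, then (instanton bundles being in particular rank-$2$ reflexive sheaves) Proposition~\ref{prop.mins} gives $\chi^{\loc}(E) = \chi(x,E) \ge k-1$. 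Combining the two cases, the local charge of any $SU(2)$-instanton on $Z_k$ lies in $\{0\} \cup \{k-1,\,k,\,k+1,\dots\}$; in particular the values $1, 2, \dots, k-2$ are never realized, which is the asserted obstruction.

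Finally I would record sharpness and the global counterpart. Sharpness is Theorem~\ref{mini}: the value $\chi^{\loc} = k-1$ does occur, on a $(k-2)$-dimensional family of minimal instantons, so the lower bound cannot be improved. For the global statement, Propositions~\ref{glue} and~\ref{patching} together with Blache's formula~\eqref{blache} show that on any compact surface $Z$ containing a $-k$-line $\ell$, the localized contribution $\chi(\ell,E)$ to the total charge is again either $0$ or $\ge k-1$, so $\ell^2$ obstructs the corresponding global charge configurations as well. The step requiring the most care is the identification, via Definition~\ref{localcharge} and the framed local Kobayashi--Hitchin correspondence, of the analytic ``local charge'' with the holomorphic invariant $\chi^{\loc}$ (and of its two summands with height and width); granting that, the remainder is bookkeeping with splitting types modulo $k$.
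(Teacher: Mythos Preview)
Your proposal is correct and follows precisely the route the paper indicates: the paper itself gives no separate proof of this corollary, stating only that it follows ``using the results of the following two sections,'' and you have correctly identified those results as Corollary~\ref{inst} (splitting type must be a multiple of $k$), Proposition~\ref{prop.mins} (the bound $\chi \ge k-1$ when $j \ge k$), and Theorem~\ref{mini} (sharpness). Your added remarks on the global counterpart via Blache's formula and on the role of the Kobayashi--Hitchin identification are accurate elaborations, not departures.
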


In particular, it is not always possible for the local charge to decay by
one unless $k=1$ or $2$.

\begin{corollary}
The self-intersection number of $\ell$ poses an obstruction to
instanton decay.
\end{corollary}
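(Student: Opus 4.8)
The plan is to combine three facts already in hand: the mod-$k$ rigidity of splitting types under isomorphism away from $\ell$ (Proposition \ref{isomodk}), the characterisation of which bundles carry instantons (Corollary \ref{inst}), and the value of the minimal local charge (Theorem \ref{mini}, together with Proposition \ref{prop.mins}). First I would unwind what a local decay around $\ell$ does. If an instanton $\nabla$ on $Z_k$ decays around $\ell$ to an instanton $\nabla'$, then by definition the two are related by holomorphic surgery, so the corresponding $\mathfrak{sl}(2,\mathbb{C})$-bundles $E$, $E'$ satisfy $E\vert_{Z_k^o} \cong E'\vert_{Z_k^o}$, and --- by the very definition of a local decay, cf.\ Proposition \ref{patching} --- the local charge strictly drops, $\chi^{\loc}(E') < \chi^{\loc}(E)$. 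Proposition \ref{isomodk} then forces the splitting types $j$ of $E$ and $j'$ of $E'$ to be congruent mod $k$, while Corollary \ref{inst} says that, being instanton bundles, both $j$ and $j'$ are multiples of $k$. So a decay can only move us within the class of bundles of splitting type $0, k, 2k, \dots$.

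Next I would pin down the admissible local charges. A bundle of splitting type $0$ is globally trivial, with $\chi^{\loc} = 0$; a bundle of splitting type $\ge k$ has $\chi^{\loc} \ge k-1$ by Proposition \ref{prop.mins}; and the value $k-1$ is attained, by Theorem \ref{mini}. Hence the local charges realised by instantons on $Z_k$ all lie in $\{0\} \cup \{\, n : n \ge k-1 \,\}$, so the band of charges $\{1, \dots, k-2\}$ is never attained, and this band is nonempty precisely when $k \ge 3$, i.e.\ precisely when $\ell^2 \le -3$.

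The conclusion then follows. Since decay strictly lowers the local charge while staying inside this discrete admissible set, an instanton of local charge $k-1$ --- the minimal nontrivial value --- can only decay to an instanton of local charge $0$; in particular it cannot decay ``by one'' to charge $k-2$ once $k \ge 3$, because no instanton of charge $k-2$ exists. More generally, no instanton can ever decay to a local charge lying in $\{1, \dots, k-2\}$, for the same reason. Thus the gap in attainable charges, whose width $k-2$ is dictated by the self-intersection $\ell^2 = -k$, is a genuine obstruction to instanton decay. I expect the one delicate point to be keeping the bookkeeping honest: one must use ``decay strictly lowers the local charge'' correctly against the \emph{discrete} set of admissible values, and resist over-claiming --- total decay from any splitting type $mk$ down to $0$ \emph{is} permitted by Proposition \ref{isomodk}, so what the self-intersection obstructs is precisely the availability of the intermediate charges $1, \dots, k-2$, not the possibility of decaying at all.
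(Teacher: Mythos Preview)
Your argument is correct and matches the paper's own reasoning: the paper does not give a formal proof of this corollary, but precedes it with the sentence ``In particular, it is not always possible for the local charge to decay by one unless $k=1$ or $2$,'' deferring to the results of the following two sections (Corollary~\ref{inst}, Proposition~\ref{prop.mins}, Theorem~\ref{mini}). Your write-up simply unpacks that sentence, exhibiting the gap $\{1,\dots,k-2\}$ in attainable local charges and observing that decay cannot land there; the invocation of Proposition~\ref{isomodk} in your first paragraph is slightly redundant (Corollary~\ref{inst} alone already forces both $j$ and $j'$ to be multiples of $k$), but harmless.
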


\begin{example}
Here some examples, which will be proved below.
\begin{enumerate}
\item There is no nontrivial instanton with local charge $\leq k-2$
      over the space $Z_k$ when $k>2$. 
\item For $k \geq 2$, there exist $(k-2)$-dimensional families of
      (unframed) instantons with local charge $k-1$ over $Z_k$.
\end{enumerate}
\end{example}

\section{Existence of instantons}

In \cite{Ga2} it is shown that every holomorphic bundle on $Z_k$ is an
algebraic extension of line bundles. It then follows that any rank-$2$
bundle $E$ over $Z_k$ with $c_1(E) = 0$ is an extension of the form
\[ 0 \to \mathcal{O}(-j) \to E \to \mathcal{O}(j) \to 0 \text{ .} \]
Thus existence of moduli $\mathcal{M}_j(k)$ of bundles with any
splitting type $j$ over $Z_k$ is an immediate consequence of the fact
that $\Ext^1\bigl(\mathcal{O}_{Z_k}(j), \mathcal{O}_{Z_k}(-j)\bigr)
\neq \varnothing$. Moreover, in \cite[Theorem 4.2]{BG2} it is shown
that for $j > k$
\begin{equation}\label{localmoduli}\dim \mathcal{M}_j(k) = 2j - k - 2 \text{ .} \end{equation}
Note that $\dim\mathcal{M}_j(k)$ is not the dimension of
$\Ext^1\bigl(\mathcal{O}_{Z_k}(j), \mathcal{O}_{Z_k}(-j)\bigr)$ as a
vector space, since bundle isomorphisms impose several equivalences;
rather it is the dimension of the dense, open stratum of
$\mathcal{M}_j(k)$ seen as a variety. Now we analyse which of these
bundles  correspond to instantons. Firstly, we look at them from
the point of view of decay, and secondly from the point of view of
differential geometry.

The \emph{energy} of an instanton on $X$ is given by $\frac{1}{2g^2}
\int_X \norm{F}^2$. The minimum possible energy of the instanton is
bounded from below by the total charge of the instanton. To see this,
note the following inequalities:
\[ 0 \leq \int_X \norm{F \pm \hodge F}^2 = 2 \int_X \norm{F \wedge \hodge F \pm F \wedge  F} \]
Therefore,
\[ \frac{1}{2g^2} \int_X \norm{F}^2 = \frac{1}{2g^2} \int_X \norm{F \wedge \hodge F}
   \geq \frac{1}{2g^2} \left\vert \int_X F \wedge F \right\vert = \frac{8\pi^2|n|} {g^2} \text{ .}
\]
Consequently, the probability of finding an instanton with $c_2 = n$
is $\propto e^{-n}$. Arguing from the physical point of view, since
systems always tend to go to their lowest energy state, an instanton
with a high charge will prefer to decay to an instanton of a lower
charge unless there is some obstruction to its decay. It is reasonable
to expect that this behaviour 
holds locally as well. Thus it should be possible
to lower a local charge by a local transformation. Instanton bundles,
therefore, ought to allow for full local decay; combining with
Proposition \ref{isomodk} this implies that only bundles which are
trivial on $Z_k^o$ can correspond to instantons. Moreover, the
finite-energy condition for instantons, \textit{viz.}\ $\int_X
\norm{F}^2\leq\infty$, requires that $F \to 0$ at infinity, and
accordingly an instanton bundle $E$ on $Z_k$ should be trivial and
trivialised at infinity. This requirement in turn fixes boundary
conditions and guarantees that instantons have good gluing properties.

Mathematically, the correct way to decide which bundles correspond to
instantons is to go through the Kobayashi--Hitchin correspondence
(cf.\ \cite{LT}). A unitary, anti-self-dual connection $\nabla$ on a
smooth bundle $E$ decomposes as $\nabla = \partial + \bar\partial$,
where $\bar\partial$ is considered as a holomorphic structure on $E$;
and the Kobayashi--Hitchin correspondence claims that the map $\nabla
\mapsto \bar\partial$ is invertible. In the compact case, this
correspondence was proved by Donaldson \cite{D1} for projective
algebraic surfaces, by Uhlenbeck and Yau \cite{UY} for K\"{a}hler
surfaces, and by Buchdahl \cite{Bu} for surfaces with a Gauduchon
metric. In the non-compact case, this correspondence was proved by
Donaldson \cite{D2} for $\mathbb{C}^2$ and by King \cite{Kn} for
$\mathbb{C}^2$ blown up at the origin, which in this paper is denoted
by $Z_1$. In the former, instantons on $\mathbb{C}^2$ are identified
with instantons on $\mathbb{C}P^2$ framed at a line at infinity; and
in the latter framed instantons on $Z_1$ are identified with
instantons on the first Hirzebruch surface $\Sigma_1$ which are
trivial on the line at infinity. As in the non-compact cases of
$\mathbb{C}^2$ and $Z_1$ we identify framed instantons on $Z_k$ with
instantons on the $k^\mathrm{th}$ Hirzebruch surface $\Sigma_k \ce
\mathbb{P}\bigl(\mathcal{O}_{\mathbb{P}^1}(k) \oplus
\mathcal{O}_{\mathbb{P}^1}\bigr)$ trivialised on the line at
infinity.

LeBrun \cite{LB1} provided metrics over the spaces $Z_k$ that are
well suited for instanton problems. He showed that $Z_k$ admits a
complete, zero scalar curvature, asymptotically flat K\"{a}hler metric
$g$. In particular, this metric is anti-self-dual. Moreover, he showed
that up to multiplication by an overall constant, there is exactly one
such metric $g$ which is $SU(2)$-invariant. Using this asymptotically
flat metric, triviality at the line at infinity still seems a natural
condition to impose. In fact, consider the orbifold $\bar Z_k$
obtained from $Z_k$ by adding one point at infinity, or equivalently,
obtained from $\Sigma_k$ by contracting the line at infinity
$\ell_{\infty}$. Then it follows from \cite[p.~235]{LB2} that $\bar
Z_k$ is an ASD conformal orbifold compactification of $Z_k$. This
orbifold compactification of $Z_k$ has an orbifold twistor space $W$,
cf.\ \cite{LB3}. The complex structure on $Z_k$ yields a complex
surface in $W$, and adding the orbifold twistor line at infinity
compactifies this surface to the Hirzebruch surface $\Sigma_k$. Using
the Ward correspondence together with Uhlenbeck's removable
singularities theorem \cite{U}, an $L^2$-instanton on $Z_k$ gives rise
to a holomorphic bundle on $\Sigma_k$ that is trivial on
$\ell_{\infty}$. So, once again, from this second point of view we
arrive at the conclusion that instanton bundles on $Z_k$ should be the
ones that are trivial at infinity. We set the \textit{ad hoc}
definition of \emph{framed stability}, cf.\ Definition \ref{frame}.

\begin{definition}\label{stable}
A rank-$2$ bundle over $Z_k$ is called \emph{framed-stable} if it is
holomorphically trivial and framed on $Z^o_k$.
\end{definition}

This allows a statement the \emph{Kobayashi--Hitchin correspondence}
on $Z_k$:
 
\begin{proposition}\label{kobhitchin}
There exists a one-to-one correspondence between framed $SU(2)$-in\-stan\-tons
on $Z_k$ with local charge $n$ and framed-stable $\mathfrak{sl}(2, \mathbb{C})$-bundles
on $Z_k$ with $\chi^{\loc} = n$.
\end{proposition}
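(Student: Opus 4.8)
The plan is to read Proposition~\ref{kobhitchin} as a local, non-compact Kobayashi--Hitchin correspondence on $Z_k$, reduced to the compact orbifold $\bar Z_k$ by way of its twistor space and Uhlenbeck's removable singularities theorem \cite{U}, along the lines of Donaldson's treatment of $\mathbb{C}^2$ \cite{D2} and King's of $Z_1$ \cite{Kn}. The structural remark that organises everything is that the word \emph{framed} already builds holomorphic triviality over $Z_k^o$ into both sides: a framing of the holomorphic bundle $(V,\bar\partial_\nabla)$ over $Z_k^o$ in the sense of Definition~\ref{frame} can exist only if that bundle is $\bar\partial_\nabla$-trivial there, which is exactly the framed-stability of Definition~\ref{stable}. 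So the two sides already agree at the level of holomorphic data over $Z_k^o$, and what remains is (i) the analytic passage between such data on $Z_k$ and ASD connections, (ii) bijectivity of that passage, and (iii) the equality of invariants. I would carry these out in the order: easy direction, hard direction, bookkeeping.

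Easy direction (instanton to bundle): any $SU(2)$-bundle on $Z_k$ is topologically trivial, since $Z_k$ retracts onto $\mathbb{P}^1$ and $\pi_1(SU(2))=0$, so a framed $SU(2)$-instanton amounts to a finite-energy ASD connection $\nabla$ together with a framing over $Z_k^o$. The holomorphic structure $\bar\partial_\nabla$ makes the underlying bundle $V$ into a holomorphic bundle $E$ on $Z_k$ which, by the presence of the framing, is holomorphically trivial and framed over $Z_k^o$, i.e.\ a framed-stable $\mathfrak{sl}(2,\mathbb{C})$-bundle. By \cite{Ga2}, $E$ is an algebraic extension $0\to\mathcal{O}(-j)\to E\to\mathcal{O}(j)\to0$, and triviality over $Z_k^o$ forces $k\mid j$ by Proposition~\ref{isomodk} (the forward half of Corollary~\ref{inst}).

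Hard direction and bijectivity: given a framed-stable $E$ on $Z_k$, its frame over $Z_k^o$, which contains a deleted neighbourhood of the line at infinity $\ell_\infty\subset\Sigma_k$, lets me glue $E$ to the trivial bundle across $\ell_\infty$, producing a holomorphic bundle $\hat E$ on $\Sigma_k$ trivial on $\ell_\infty$, equivalently an orbifold bundle on $\bar Z_k$ trivial at the orbifold point. I would lift $\hat E$ to a holomorphic bundle on the orbifold twistor space $W$ of $\bar Z_k$ \cite{LB3} and apply the Ward correspondence to obtain an ASD connection on $\bar Z_k$; its restriction to $Z_k$ then has finite energy, being the restriction of a smooth connection on the compact orbifold, its $\bar\partial$-operator recovers $E$, and the frame over $\ell_\infty$ restricts to the germ at infinity of the given frame, the full frame over $Z_k^o$ being read off from $E$ itself. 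That the two directions are mutually inverse is the framed analogue of Proposition~\ref{glue} together with bijectivity of the twistor/Ward correspondence on the compact orbifold, and the equality of invariants $\lc(\nabla,\ell)=\chi^{\loc}(E)=\mathbf{w}(E)+\mathbf{h}(E)$ is Definition~\ref{localcharge}.

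The main obstacle is the hard direction, and specifically showing that the \emph{ad hoc} framed-stability of Definition~\ref{stable} really is the analytically correct hypothesis: one must check that a framed-stable $E$, after compactification, extends to a holomorphic bundle on the twistor space $W$, equivalently that the induced orbifold bundle on $\bar Z_k$ is polystable for LeBrun's K\"ahler class, so that the Ward correspondence applies. This is where the self-intersection number $\ell^2=-k$ genuinely enters, through $\pi_1(Z_k^o)=\mathbb{Z}/k\mathbb{Z}$ and hence the constraint $k\mid j$, and where one must control both the connection and the bundle near the orbifold point of $\bar Z_k$; the asymptotically flat geometry of $Z_k$, modelled at infinity on $\mathbb{R}^4/\mathbb{Z}_k$ rather than on $\mathbb{R}^4$ as for $\mathbb{C}^2$ and $Z_1$, is what makes this analysis heavier than in the previously known cases.
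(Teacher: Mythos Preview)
Your approach matches the paper's, with the important caveat that the paper does not supply a separate proof of Proposition~\ref{kobhitchin} at all: the proposition is stated immediately after the paragraph on LeBrun's metric, the orbifold compactification $\bar Z_k$, its twistor space $W$, and the Ward correspondence combined with Uhlenbeck's removable singularities theorem, and that paragraph \emph{is} the paper's entire justification. The framed-stability condition of Definition~\ref{stable} is explicitly labelled \emph{ad hoc}, chosen so that the correspondence holds by construction.

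Your proposal follows the same route---compactify to $\Sigma_k$ (equivalently $\bar Z_k$), lift to the orbifold twistor space, apply Ward/Penrose, and invoke Uhlenbeck near the orbifold point---but supplies considerably more structure than the paper does: you separate the easy and hard directions, spell out the gluing across $\ell_\infty$, and correctly flag the hard direction (producing an ASD connection from a framed-stable bundle, i.e.\ checking that the compactified bundle really lies in the image of the Ward transform) as the genuine analytical content. The paper does not carry this out either; it effectively asserts the correspondence by analogy with the known cases of $\mathbb{C}^2$ \cite{D2} and $Z_1$ \cite{Kn}. So your sketch is aligned with, and in fact more detailed than, the paper's own treatment, and your identification of the remaining obstacle is apt: the paper leaves that same gap open.
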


Schematically,
\begin{align}\label{localKH}
  \left\{ \parbox{4cm}{\centering $SU(2)$-instantons on\\$Z_k$ with local charge $n$} \right\}
  &\overset{\text{K.--H.}}{\Longleftrightarrow}
  \Bigl\{ \parbox{4cm}{\centering stable $SL(2)$-bundles\\on $Z_k$ with $\chi^{\loc} = n$} \Bigr\} \text{ .}
\end{align}

\begin{corollary}\label{inst}
An $\mathfrak{sl}(2, \mathbb{C})$-bundle over $Z_k$ represents an
instanton if and only if its splitting type is a multiple of $k$.
\end{corollary}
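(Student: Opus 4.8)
The plan is to unwind the definitions and chain together Propositions~\ref{isomodk} and~\ref{kobhitchin}. By Proposition~\ref{kobhitchin}, an $\mathfrak{sl}(2,\mathbb{C})$-bundle $E$ on $Z_k$ represents an instanton precisely when it carries the structure of a framed-stable bundle (with local charge $n = \chi^{\loc}(E)$); and by Definition~\ref{stable} this is equivalent to $E$ being holomorphically trivial on $Z_k^o$, the existence of a frame then being automatic since a trivial bundle admits a pair of fibrewise linearly independent holomorphic sections over $Z_k^o$. So the content of the corollary reduces to a single equivalence: $E$ is holomorphically trivial on $Z_k^o$ if and only if its splitting type $j$ is a multiple of $k$.

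To establish that equivalence I would invoke Proposition~\ref{isomodk} directly. Recall from the introduction that $j$ is well defined because $c_1(E)=0$, so that $E\vert_\ell \cong \mathcal{O}(j)\oplus\mathcal{O}(-j)$ by Grothendieck's lemma. Apply Proposition~\ref{isomodk} with $E_1 = E$ and $E_2 = \mathcal{O}_{Z_k}\oplus\mathcal{O}_{Z_k}$ the trivial rank-$2$ bundle, whose splitting type is $0$: the proposition gives $E\vert_{Z_k^o} \cong E_2\vert_{Z_k^o}$ if and only if $j \equiv 0 \pmod{k}$, i.e.\ $E\vert_{Z_k^o}$ is trivial if and only if $k \mid j$. (Equivalently, one may simply quote the last sentence of Proposition~\ref{isomodk}: $E$ can decay totally over $Z_k$ — i.e.\ become trivial near $\ell$ — exactly when $j \equiv 0 \pmod{k}$.)

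Combining the two equivalences yields the chain: $E$ represents an instanton $\iff$ $E$ is framed-stable $\iff$ $E\vert_{Z_k^o}$ is trivial $\iff$ $k \mid j$. There is no genuine obstacle; the one point deserving a line of care is that framed stability in Definition~\ref{stable} demands triviality only on the punctured neighbourhood $Z_k^o$, not on all of $Z_k$ — a globally trivial bundle has splitting type exactly $0$, not merely $0 \bmod k$ — so it is precisely the mod-$k$ phenomenon of Proposition~\ref{isomodk}, rather than a naive global triviality argument, that does the work.
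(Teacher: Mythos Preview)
Your proof is correct and follows essentially the same route as the paper's: the paper's two-line argument (``by definition an instanton bundle must be trivial at infinity, now apply Proposition~\ref{isomodk}'') is exactly your chain through Proposition~\ref{kobhitchin} and Definition~\ref{stable} followed by Proposition~\ref{isomodk}, only stated more tersely. Your version has the virtue of making the intermediate equivalence (framed-stable $\iff$ trivial on $Z_k^o$) explicit and noting that the framing is automatic once triviality holds.
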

\begin{proof}
By definition an instanton bundle 
must be trivial at infinity, now apply
Proposition \ref{isomodk}.
\end{proof}

In particular, note that any bundle on $Z_k$ with splitting type $j
\not\equiv 0 \mod k$ does not correspond to an instanton; the
physical interpretation of such a bundle does not seem to be known.

\section{Gaps in local charges and moduli}

In this section we study gaps in local charges. We show that
not all numerically admissible values of $\chi$ occur for instanton 
bundles when $k > 2$. Recall that by Definition \ref{localcharge} the local
charge is the sum of the width and the height: $\chi^{\loc}(E) =
\mathbf{w}(E) + \mathbf{h}(E) = h^0\bigl(X; (\pi_*E)^{\vee\vee} \bigl/
\pi_* E \bigr) + h^0\bigl(X; R^1 \pi_* E \bigr)$.

\subsection{Direct computation of instanton widths}\label{sec.compwidth}

Results in this section depend on a number of ``direct
calculations''. We explain briefly how those are carried out and
provide an open computer implementation. We outline the computer
algorithm, following closely the ideas in \cite{GaS} and keeping to
minimal detail. The implementation of the algorithm can be obtained
from \url{http://www.maths.ed.ac.uk/~s0571100/Instanton/}.  Our
language of choice is \emph{Macaulay~2} for its native support of
high-level concepts of commutative algebra (such as modules,
generators, cokernels); though conceivably a different computer
algebra software may be used.

Let $E \to Z_k$ be a holomorphic rank-$2$ vector bundle over the
complex surface $Z_k = \Tot\bigl(\mathcal{O}(-k)\bigr)$ with $c_1(E) =
0$. The canonical coordinates on $Z_k = U \cup V$ are $U=\{z,u\}$ and
$V=\{\xi,v\}$ such that $\xi=z^{-1}$ and $v=z^k u$. A holomorphic
bundle $E$ on $Z_k$ is algebraic \cite{Ga2}. If the splitting type of
$E$ is $j$, then it can be expressed by a canonical transition function
\[ T = \begin{pmatrix} z^j & p \\ 0 & z^{-j} \end{pmatrix} \text{ ,} \]
where
\begin{equation}\label{extclass}
  p(z,u) = \sum_{r=1}^{\left\lfloor\frac{2j-2}{k}\right\rfloor} \sum_{s=kr-j+1}^{j-1} p_{rs} \; u^r z^s \text{ .}
\end{equation}

The computation of the instanton width is now equivalent to the
computation of the dimension of the cokernel of the natural evaluation
map $M \hookrightarrow M^{\vee\vee}$, where $M$ is a module that is
related to the space of holomorphic sections of $E$: Let $(a,b)$ be a
generic section of $E$ given over the $(z,u)$-chart by functions $a,b
\in \mathbb{C}[[z,u]]$. This implies that on the other chart, the
local section
\[ T\begin{pmatrix}a\\b\end{pmatrix} = \begin{pmatrix} z^j\,a + p\,b \\ z^{-j}\,b \end{pmatrix} \]
is holomorphic in $(z^{-1}, z^k\,u)$. Writing $a(z,u) = a_{rs} u^r
z^s$ and $b(z,u) = b_{rs} u^r z^s$, this means that for each fixed
index $r$ only a finite number of $a_{rs}$ and $b_{rs}$ can be
non-zero, and there are relations between the non-zero coefficients of
$a$ and $b$ (unless $p \equiv 0$). The space of sections of $E$ is
thus generated by terms $(u^r z^s, u^{r^\prime}z^{s^\prime})$, of
which most are multiples or linear combinations of a finite set of
true generators.

For our computation we need to consider generators and
relations after contracting the zero-section of $Z_k$ to a point,
i.e.\ on the direct image under the contracting map $\pi \colon Z_k
\to X_k$. Here $X_k$ is the singular surface (smooth only for $k=1$)
given in coordinates by $S = \bigl\{x_0, \dotsc, x_k\bigr\} \bigl/
(x_i x_{i+t} - x_{i+1} x_{i+t-1})$ with $0 \leq i \leq i+t \leq k+1$,
and the map $\pi$ is given by $x_i = z^i u$. The module $M$ is now the
space of sections of $E$ \emph{when the relations are expressed in
terms of the $x_i$ downstairs}, i.e.\ as an $S$-module.

\begin{example}
For the first two values of $k$ we have $Z_1 \to X_1 = \mathbb{C}^2 =
\bigl\{x_0,x_1\bigr\}$ and $Z_2 \to X_2 = \bigl\{x_0,x_1,x_2\bigr\} \bigl/
(x_0x_2-x_1^2)$, where $u \mapsto x_0$, $zu \mapsto  x_1$
and $z^2u \mapsto  x_2$.

The first two generators coming from $b_{00}$ and $b_{01}$ are
$\beta_0 = (0, 1)$ and $\beta_1 = (0, z)$, respectively, and they are
related over $S$ by
\begin{align*}
  \qquad&& x_1 \beta_0 &= x_0 \beta_1  && \text{(on $Z_1$ and $Z_2$), and} &&\qquad \\
  \qquad&& x_2 \beta_0 &= x_1 \beta_1  && \text{(on $Z_2$ only).} &&\qquad
\end{align*}
The concrete case $j=k$ and $p(z,u)=zu$ is worked out in the proof of
Theorem \ref{mini}.
\end{example}

\paragraph{Computer algorithm} The automatic computation of the instanton
width of a bundle $E$ with $c_1(E) = 0$, splitting type $j$ and
extension class $p$ can now proceed in several stages:
\begin{enumerate}
\item (Optional) The extension class $p$ may possibly be reduced to a
      smaller, cohomologous class $p'$ by truncating terms according
      to \eqref{extclass}, but care needs to be taken when $u \nmid
      p$. This step is only useful to optimise computation time, it is
      not necessary for the algorithm to work.
\item Define a generic section $(a,b)$ with $a(z,u) = \sum_{r,s}
      a_{rs} u^r z^s$ and $b(z,u) = \sum_{r,s}b_{rs} u^r z^s$. There
      exist bounds on $r$ above which all generators corresponding to
      the $a_rs$ and $b_rs$ terms are guaranteed to be multiples (over
      $S$) of the lower generators, so these are genuine
      polynomials. Moreover, $b$ can and must be chosen so that
      $z^{-j}\,b$ is holomorphic in $z^{-1}$ and $z^ku$.
\item Compute the first coordinate of the section on the second chart:
      $f = z^j\,a + p\,u$. Now for each term $z^s u^r$, whenever $s >
      kr$ the coefficient must vanish; this gives relations between
      the coefficients.
\item The module $M$ is now built up step by step by substituting the
      relations back into $a$ and $b$, and setting one coefficient to
      $1$ and all others to $0$ (call the result $a_1$, $b_1$ just for
      now), transforming $(a_1, b_1)$ into expressions over $S$ and
      adding the resulting vector as a generator of $M$. After 
      doing this for
      all coefficients, a presentation for $M$  as a module over $S$
       is obtained. (See
      \cite{GaS} for details of this construction, in particular how
      it deals with ``fake relations''.)
\item The computation of the cokernel of $\operatorname{ev} \colon M
      \hookrightarrow M^{\vee\vee}$ relies on \cite[Lemma 2.1]{GaS}
      and can be done very easily in \emph{Macaulay~2}. The instanton
      width of $E$ is the dimension of $\coker(\operatorname{ev})$ as
      a $\mathbb{C}$-vector space.
\end{enumerate}

\subsection{Computation of instanton heights}

We will use the following formula for the height, which is proved in
\cite{BG2}:

\begin{theorem}[{\cite[2.6]{BG2}}]
Let $E$ be a non-split bundle represented in canonical form by
$(j,p)$, and let $m > 0$ be the smallest exponent of $u$ appearing in
$p$. With $\mu = \min\bigl(m, \lfloor\frac{j-2}{k}\rfloor\bigr)$, we
have
\begin{equation}\label{eq.height}
  l(R^1\pi_*E) = \mu \left(j - 1  - k\;\frac{\mu - 1}{2}\right) \text{  ,}
\end{equation}
and equality holds if $p$ is holomorphic on $Z_k$.
\end{theorem}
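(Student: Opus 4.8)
The plan is to convert the statement into an explicit \v{C}ech computation governed by the canonical transition matrix. Since $X_k$ is affine and $R^1\pi_*$ is supported at the image of $\ell$, the Leray spectral sequence for $\pi$ gives $l\bigl(R^1\pi_*\f\bigr) = \dim_{\mathbb C} H^1(Z_k,\f)$ for every coherent $\f$, while the fibres of $\pi$ being at most one-dimensional force $R^{\geq 2}\pi_* = 0$; alternatively one may work on a finite infinitesimal neighbourhood of $\ell$, which by \cite{Ga2} carries the same information. So it suffices to compute $h^1(Z_k, E)$. Writing $q\colon Z_k\to\mathbb P^1$ for the projection and feeding the defining extension $0\to\mathcal O_{Z_k}(-j)\to E\to\mathcal O_{Z_k}(j)\to 0$ into the long exact cohomology sequence, the vanishings $H^1\bigl(Z_k,\mathcal O_{Z_k}(j)\bigr)=0$ and $H^2\bigl(Z_k,\mathcal O_{Z_k}(-j)\bigr)=0$ for $j\geq 0$ — both read off from the Leray sequence of the affine map $q$ and the identity $q_*\mathcal O_{Z_k}=\bigoplus_{i\geq 0}\mathcal O_{\mathbb P^1}(ki)$ — yield
\[ l\bigl(R^1\pi_*E\bigr) = h^1\bigl(Z_k,\mathcal O_{Z_k}(-j)\bigr) - \operatorname{rank}(\delta)\text{ ,} \]
where $\delta\colon H^0\bigl(Z_k,\mathcal O_{Z_k}(j)\bigr)\to H^1\bigl(Z_k,\mathcal O_{Z_k}(-j)\bigr)$ is the connecting homomorphism. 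For the affine cover $Z_k=U\cup V$ it is cup product with the extension class, i.e.\ it sends a section $a$ to the \v{C}ech class $[\,p\cdot a\,]$.

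On the easy side, the projection formula gives $H^1\bigl(Z_k,\mathcal O_{Z_k}(-j)\bigr)=\bigoplus_{i\geq 0}H^1\bigl(\mathbb P^1,\mathcal O(-j+ki)\bigr)$, a space graded by fibre degree $i$ whose $i$-th summand has dimension $\max(0,\,j-1-ki)$; it is therefore concentrated in degrees $0,\dots,\lfloor(j-2)/k\rfloor$, and $h^1\bigl(Z_k,\mathcal O_{Z_k}(-j)\bigr)=\sum_{i=0}^{\lfloor(j-2)/k\rfloor}(j-1-ki)$. In the same way $H^0\bigl(Z_k,\mathcal O_{Z_k}(j)\bigr)$ is graded by fibre degree, and in these \v{C}ech models multiplication by $p$ raises the fibre degree by at least $m$, the least exponent of $u$ occurring in $p$. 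Hence $\operatorname{im}(\delta)$ meets only the summands of degree $\geq m$, so $\operatorname{coker}(\delta)$ contains the bottom $\mu$ of them, where $\mu=\min\bigl(m,\lfloor(j-2)/k\rfloor\bigr)$ — the truncation by $\lfloor(j-2)/k\rfloor$ being forced because no summand of higher degree exists. Via the elementary identity $\sum_{i=0}^{\mu-1}(j-1-ki)=\mu\bigl(j-1-k\tfrac{\mu-1}{2}\bigr)$ this already gives the lower bound $l\bigl(R^1\pi_*E\bigr)\geq\mu\bigl(j-1-k\tfrac{\mu-1}{2}\bigr)$.

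The hard half — the step I expect to be the main obstacle — is the matching upper bound: that $\delta$ surjects onto \emph{every} graded summand of fibre degree between $m$ and $\lfloor(j-2)/k\rfloor$, so that $\operatorname{coker}(\delta)$ is no larger than the bottom $\mu$ summands. Writing $p=\sum_{r\geq m}p_r(z)\,u^r$ with $p_m\neq 0$ and, for a target degree $i$ in that range, testing $\delta$ on the sections $a_{i-m}(z)\,u^{i-m}$ of $\mathcal O_{Z_k}(j)$, the problem reduces to showing that the classes $[\,p_m\,a_{i-m}\,u^i\,]$ exhaust the degree-$i$ summand of $H^1\bigl(Z_k,\mathcal O_{Z_k}(-j)\bigr)$; this is a triangularity argument comparing the $z$-exponents occurring in $p_m\,a_{i-m}$ with the window $\{ki-j+1,\dots,-1\}$ of exponents representing nonzero classes. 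It is precisely here that the holomorphy of $p$ enters: poles of $p$ can disturb the leading-exponent bookkeeping, and without that hypothesis the argument produces only one of the two bounds. Keeping the exponent ranges dictated by \eqref{extclass} aligned with those detecting nonzero cohomology, and dealing with the boundary case $m\geq\lfloor(j-2)/k\rfloor$, is the delicate part; once the surjectivity is in hand, the two bounds coincide and \eqref{eq.height} follows.
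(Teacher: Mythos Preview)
The paper does not actually prove this statement: it is quoted from \cite{BG2} and then used as an input to the subsequent results (Proposition~\ref{prop.mins} and Theorem~\ref{mini}). There is no argument in the present paper for your proposal to be compared against.

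On its own merits, your plan is the natural one. Identifying $l(R^1\pi_*E)$ with $h^1(Z_k,E)$ via the affineness of $X_k$, feeding the defining extension into the long exact sequence, and analysing the connecting map $\delta$ as cup product with $p$ on the $u$-graded \v{C}ech model is exactly how one would expect the computation in \cite{BG2} to run. Your lower bound is correct as stated. For the upper bound you have correctly located the real work --- degree-by-degree surjectivity of $\delta$ --- and your idea of using the leading coefficient $p_m(z)$ to set up a triangular system is the right one; the holomorphy hypothesis on $p$ is needed precisely so that the $z$-exponent window of $p_m$ matches the \v{C}ech window $\{ki-j+1,\dots,-1\}$ that detects nonzero classes.

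One small warning on the edge case. Your stated reason for the truncation in $\mu$ (``no summand of higher degree exists'') would naturally produce $\min\bigl(m,\lfloor(j-2)/k\rfloor+1\bigr)$, since $H^1\bigl(Z_k,\mathcal O_{Z_k}(-j)\bigr)$ has $\lfloor(j-2)/k\rfloor+1$ nonzero graded pieces, indexed by $i=0,\dots,\lfloor(j-2)/k\rfloor$. With the value $\mu=\min\bigl(m,\lfloor(j-2)/k\rfloor\bigr)$ as printed, the boundary case $m>\lfloor(j-2)/k\rfloor$ seems to undercount by one summand: $\delta$ then has image in degrees strictly above the top nonzero degree, hence is zero, and the cokernel is all of $H^1$. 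Whether this reflects a slip in your bookkeeping or an imprecision in the paper's transcription from \cite{BG2} (the statement already reads oddly, asserting ``$=$'' and then adding ``equality holds if\dots''), you should verify the precise formulation against the original source before finalising the boundary analysis.
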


\subsection{Results}

\begin{lemma}\label{jlessk}
Let $E_j$ be a rank-$2$ bundle over $Z_k$ with $c_1 = 0$ and splitting
type $j < k$. Then
\begin{equation}\label{chiloc} \chi^{\loc} (E_j) = j - 1 \text{ .} \end{equation}
\end{lemma}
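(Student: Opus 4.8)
The plan is to use the structure theory recalled in the preceding sections: for a rank-$2$ bundle $E_j$ on $Z_k$ with $c_1 = 0$ and splitting type $j < k$, the defining range of the extension parameter $p$ in \eqref{extclass} forces $p \equiv 0$, so $E_j$ is a split bundle $\mathcal{O}(-j) \oplus \mathcal{O}(j)$. Indeed the outer sum in \eqref{extclass} runs over $r$ from $1$ to $\lfloor (2j-2)/k \rfloor$, and when $j < k$ we have $2j - 2 < 2k - 2 < 2k$, so this floor is at most $1$; and for $r = 1$ the inner sum runs over $s$ from $k - j + 1$ to $j - 1$, which is empty precisely because $j < k$ gives $k - j + 1 > j - 1$ (equivalently $k > 2j - 2$, which holds as $k > j \geq j - 1 + (j-1)$ fails only if $j \le 1$, a case handled directly). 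Hence $p = 0$ and $E_j \cong \mathcal{O}(-j) \oplus \mathcal{O}(j)$.

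Next I would compute the two pieces of $\chi^{\loc}(E_j) = \mathbf{w}(E_j) + \mathbf{h}(E_j)$ separately. For the height: since $E_j$ is split there is no exponent of $u$ appearing in $p$, so the height formula \eqref{eq.height} is not directly invoked, but a split extension has $R^1\pi_* E_j = R^1\pi_*\mathcal{O}(-j) \oplus R^1\pi_*\mathcal{O}(j)$, and $\mathbf{h}(E_j) = h^0(X; R^1\pi_* E_j)$ measures exactly how far $E$ is from being a split extension — which is zero here. So $\mathbf{h}(E_j) = 0$, and the whole local charge comes from the width $\mathbf{w}(E_j) = h^0\bigl(X; (\pi_* E_j)^{\vee\vee}/\pi_* E_j\bigr)$. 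This reduces the problem to computing, for the line bundle $\mathcal{O}_{Z_k}(j)$ with $0 \le j < k$, the length of the cokernel of $\pi_*\mathcal{O}(j) \hookrightarrow (\pi_*\mathcal{O}(j))^{\vee\vee}$ on $X_k$ (the $\mathcal{O}(-j)$ summand contributes nothing to the width since $\pi_*\mathcal{O}(-j)$ is already reflexive — it is the summand whose direct image is a bundle).

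The core computation is therefore the length of $(\pi_*\mathcal{O}_{Z_k}(j))^{\vee\vee} / \pi_*\mathcal{O}_{Z_k}(j)$. I would do this in the toric coordinates introduced in Section~6: $X_k = \operatorname{Spec} S$ with $S = \mathbb{C}[x_0,\dots,x_k]/(x_i x_{i+t} - x_{i+1}x_{i+t-1})$ and $x_i = z^i u$, so that $\pi_*\mathcal{O}_{Z_k}(j)$ is the $S$-module generated by the sections $z^s$ with $z^{-j}\cdot z^s$ holomorphic in $(z^{-1}, z^k u)$, i.e.\ by the monomials $z^s u^r$ with $s \le j$ and $s \le kr$, while its double dual is the full set of such sections with no positivity constraint linking $r$ and $s$ beyond $s \le j$. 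Counting the monomials $z^s u^0$ with $0 \le s \le j$ that are \emph{not} in the submodule (which needs $r \ge 1$ once $s \ge 1$, but then $x_i$-multiplication pulls them back in only down to $s \le k$, so for $1 \le s \le j-1$... ) gives exactly the lattice points missed, and a direct count yields $j - 1$ of them when $j < k$. Assembling, $\chi^{\loc}(E_j) = (j-1) + 0 = j - 1$.

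The main obstacle I anticipate is the width count: getting the module-theoretic description of $\pi_*\mathcal{O}_{Z_k}(j)$ and its reflexive hull over the singular ring $S$ exactly right, and being careful with the "fake relations" warned about in Section~\ref{sec.compwidth}, so that the lattice-point count genuinely produces $j - 1$ and not an off-by-one. An alternative route that sidesteps the explicit module presentation is to note that Proposition~\ref{prop.mins} already gives a lower bound ($\chi \ge k - 1$) in the opposite regime (splitting type $\ge k$), and to try instead a cohomological computation of $\mathbf{w}$ via the local cohomology exact sequence $0 \to \pi_*\mathcal{O}(j) \to (\pi_*\mathcal{O}(j))^{\vee\vee} \to \mathcal{H}^1_{\mathfrak{m}} \to H^1(Z_k^o, \mathcal{O}(j)) \to \cdots$; but I expect the direct toric count to be the cleanest, modulo the bookkeeping.
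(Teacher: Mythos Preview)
Your final number is right, but the two summands are swapped: for the split bundle $E_j \cong \mathcal{O}(-j)\oplus\mathcal{O}(j)$ with $j<k$ one has $\mathbf{h}(E_j)=j-1$ and $\mathbf{w}(E_j)=0$, exactly as the paper records. The informal sentence after Definition~\ref{localcharge} (``height measures how far the bundle is from being a split extension'') misled you: $\mathbf{h}(E)=h^0(X;R^1\pi_*E)$ is a genuine cohomological length, and a split bundle does \emph{not} have vanishing $R^1\pi_*$. Concretely, $R^1\pi_*\mathcal{O}(j)=0$, but $R^1\pi_*\mathcal{O}(-j)$ is the skyscraper whose stalk is $\varprojlim_n H^1(\ell_n,\mathcal{O}(-j)|_{\ell_n})$; since the conormal bundle of $\ell$ is $\mathcal{O}_\ell(k)$ and $j<k$, every graded piece beyond the zeroth has non-negative degree, so this inverse limit is just $H^1(\ell,\mathcal{O}_\ell(-j))\cong\mathbb{C}^{j-1}$. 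Hence $\mathbf{h}(E_j)=j-1$.

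Conversely, your width computation goes wrong at the reflexivity step. Neither $\pi_*\mathcal{O}(-j)$ nor $\pi_*\mathcal{O}(j)$ is a \emph{bundle} on $X_k$ (for $0<j<k$ each represents a nontrivial class in the local class group $\mathbb{Z}/k\mathbb{Z}$ of the $A_{k-1}$ singularity), but both are already reflexive: they are precisely the indecomposable maximal Cohen--Macaulay modules on $X_k$, so the natural map to the double dual is an isomorphism and $\mathbf{w}(E_j)=0$. Your proposed monomial count of ``missing $z^s u^0$'' does not detect a genuine cokernel: the generators $1,z,\dots,z^j$ already lie in $\pi_*\mathcal{O}(j)$, and the reflexive hull adds nothing. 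A secondary issue: your argument that the canonical extension polynomial $p$ in \eqref{extclass} must vanish when $j<k$ breaks down for $k/2+1\le j<k$ (e.g.\ $k=4$, $j=3$ allows $p=uz^2$); the correct route, as in the paper, is to cite the structure theorem \cite[Thm.~3.3]{Ga2} that any such bundle is nonetheless isomorphic to the split one.
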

\begin{proof}
By \cite[Theorem 3.3]{Ga2} it follows that if $j<k$ then $E_j \cong
\mathcal{O}_{Z_k}(j) \oplus \mathcal{O}_{Z_k}(-j)$. By definition,
$\chi^{\loc}(E_j) = \mathbf{w}(E_j) + \mathbf{h}(E_j)$. Direct
computation (see \cite{BG2}) then shows that $\mathbf{w}(E_j) = 0$ and
$\mathbf{h}(E_j) = j - 1$.
\end{proof}

\begin{proposition}\label{prop.mins}
Let $E$ be a rank-$2$ reflexive sheaf on $Z_k$ with splitting type
$\geq k$, then $\chi\bigl(x, E\bigr) \geq k - 1$.
\end{proposition}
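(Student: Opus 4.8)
The plan is to reduce the statement about a general splitting type $\geq k$ to the already-established base case, Lemma \ref{jlessk}, by repeatedly applying the operation $\Phi$ constructed in the proof of Proposition \ref{isomodk}. Recall that $\Phi(E) = \bigl(\otimes\,\mathcal{O}_\ell(-k)\bigr) \circ \Elm_{\mathcal{O}_\ell(j+k)} \circ \Elm_{\mathcal{O}_\ell(j)}(E)$ raises the splitting type by $k$ while preserving the isomorphism type over $Z_k^o$; I would run this operation in reverse. That is, given $E$ with splitting type $j \geq k$, I want to produce a reflexive sheaf $E'$ with splitting type $j - k$, isomorphic to $E$ on $Z_k^o$, and then understand how $\chi(x, -)$ changes under this inverse step.

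First I would set up the inverse operation explicitly as a pair of elementary transformations on $Z_k$ that lower the splitting type by $k$; since elementary transformations along the Cartier divisor $\ell$ do not change the sheaf off $\ell$, the resulting $E'$ agrees with $E$ on $Z_k^o$, and its splitting type is $j - k$. Next I would track the local holomorphic Euler characteristic: the key point is that each elementary transformation inserts or removes a quotient supported on $\ell$, so by additivity of $h^0$ and the long exact sequences of the displays, $\chi(x, E)$ and $\chi(x, E')$ differ by a controlled, nonnegative amount. Iterating, after $\lfloor (j - \text{something})/k \rfloor$ steps I reach a reflexive sheaf $E_0$ with splitting type $j_0 < k$ in the residue class of $j$ mod $k$, and I would conclude $\chi(x, E) \geq \chi(x, E_0) + (\text{nonnegative correction})$. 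By Lemma \ref{jlessk} (which is stated for bundles, but the relevant object $E_0$ of splitting type $< k$ is in fact split, hence locally free, so the lemma applies), $\chi(x, E_0) = j_0 - 1$, and combined with the correction term accumulated over the $(j - j_0)/k$ reverse-$\Phi$ steps — each of which I expect contributes at least $1$ — one gets $\chi(x, E) \geq j_0 - 1 + (j - j_0)/k \cdot (\text{something}) \geq k - 1$ when $j \geq k$, with the extremal case being $j = k$, $j_0 = 0$, giving exactly $k - 1$.

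The main obstacle I anticipate is twofold. First, bookkeeping the change in $\chi(x, -)$ under a single elementary transformation: one must chase the display diagram carefully, since $\chi$ is built from $h^0$ of both the defect $(\pi_*E)^{\vee\vee}/\pi_*E$ and of $R^1\pi_*E$, and an elementary transformation affects both terms; getting the sign and the nonnegativity right (rather than merely a bound in the wrong direction) is where the real work lies, and it may be cleanest to argue via the formulas \eqref{chiloc} and \eqref{eq.height} together with the width computation of Section \ref{sec.compwidth} rather than diagram-chasing directly. Second, one must handle reflexive sheaves that are not locally free: a priori $E$ may have a singularity at the point of $\ell$ over $x$, so the splitting type and the elementary-transformation machinery — stated above for bundles — need to be checked to still make sense; since $\ell$ is a Cartier divisor and $E$ is reflexive, $E\vert_\ell$ is still defined and splits by Grothendieck, so this should go through, but it deserves an explicit remark. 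If these two points are handled, the induction on splitting type closes immediately and the bound $k-1$ falls out of the $j = k$ case.
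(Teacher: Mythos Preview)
Your approach is genuinely different from the paper's, and as written it has a real gap. The paper's proof is two lines: by semi-continuity of $\chi(x,E)$ in the splitting type, the minimum over splitting types $\geq k$ is attained at splitting type exactly $k$; then one simply drops the width, $\chi(x,E) \geq \mathbf{h}(E) = h^0(X;R^1\pi_*E)$, and reads off $\mathbf{h}(E) = k-1$ from the closed formula~\eqref{eq.height}. No elementary transformations, no induction, no tracking of $\chi$ through displays.

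The gap in your plan is quantitative, and it bites precisely in the extremal case you single out. If $j=k$ then after one reverse-$\Phi$ step you land at $j_0=0$, where the bundle is globally trivial and $\chi(x,E_0)=0$ (note Lemma~\ref{jlessk}'s formula $j-1$ does not apply at $j=0$). So the entire bound $\chi(x,E)\geq k-1$ must come from the \emph{single} correction term you accumulate in that one step. Your stated expectation that each step ``contributes at least $1$'' therefore yields only $\chi(x,E)\geq 1$, not $k-1$; to close the argument you would need to prove that one reverse-$\Phi$ step drops $\chi$ by at least $k-1$, which is essentially the proposition itself restated. Chasing the display diagrams will not easily give this either: a single elementary transformation moves you out of the $c_1=0$ world, so the width/height decomposition and the formulas you want to invoke are unavailable for the intermediate sheaves, and the long exact sequence for $\pi_*$ relates $R^1\pi_*E$ and $R^1\pi_*E'$ only up to a term $\pi_*L$ supported on $\ell$ whose size you would again have to bound by hand. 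The paper sidesteps all of this by using semi-continuity to reduce to $j=k$ and then invoking the height formula directly.
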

\begin{proof}
By semi-continuity of $\chi\bigl(x, E\bigr)$ on the splitting type,
the lowest value of $\chi\bigl(x, E\bigr)$ must occur for splitting
type $k$. By definition,
\[ \chi\bigl(x, E\bigr) \geq h^0\bigl(X; R^1\pi_*E\bigr) \text{ ,} \]
and now apply the formula \eqref{eq.height}.
\end{proof}

\begin{theorem}\label{mini}
The minimal local charge of a nontrivial $SU(2)$-instanton on $Z_k$ is
$\chi^\mathrm{min}_k = k-1$. The local moduli space of (unframed)
instantons on $Z_k$ having fixed local charge $\chi^\mathrm{min}_k$
has dimension $k-2$.
\end{theorem}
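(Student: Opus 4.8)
The plan is to prove the two assertions of Theorem \ref{mini} separately, using Corollary \ref{inst} to translate the instanton problem into a purely algebro-geometric one about $\mathfrak{sl}(2,\mathbb{C})$-bundles on $Z_k$. By Corollary \ref{inst}, an instanton bundle has splitting type $j$ a multiple of $k$, so the nontrivial case of minimal complexity is $j = k$ (the case $j = 0$ being the trivial bundle). Hence the minimal local charge over all nontrivial instantons is the minimum of $\chi^{\loc}(E)$ taken over bundles of splitting type $k$. First I would invoke Proposition \ref{prop.mins}, which already gives the lower bound $\chi^{\loc}(E) = \chi(x, E) \geq k-1$ for any reflexive sheaf --- in particular any bundle --- of splitting type $\geq k$. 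So the first half reduces to exhibiting one nontrivial instanton bundle of splitting type $k$ with $\chi^{\loc} = k-1$ exactly, which then also shows the bound is sharp.

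For the explicit example I would take $j = k$ and the extension class $p(z,u) = zu$, as flagged in the Example in Section~\ref{sec.compwidth}; this is a legitimate choice since by \eqref{extclass} the monomial $u^1 z^1$ occurs in the allowed range when $j = k$. The height is computed from the formula \eqref{eq.height}: here $m = 1$ (the smallest exponent of $u$ in $p$), so $\mu = \min(1, \lfloor (k-2)/k \rfloor) = 0$ for $k \geq 2$, giving $\mathbf{h}(E) = l(R^1\pi_*E) = 0$. Then I would run the width computation of Section~\ref{sec.compwidth} for this concrete $(j,p) = (k, zu)$: working downstairs on $X_k = \{x_0,\dots,x_k\}/(x_ix_{i+t}-x_{i+1}x_{i+t-1})$ with $x_i = z^i u$, one builds the module $M$ of sections, forms the evaluation map $M \hookrightarrow M^{\vee\vee}$, and finds $\dim_{\mathbb{C}}\coker(\mathrm{ev}) = \mathbf{w}(E) = k-1$. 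This is the step I expect to be the main obstacle: it is the one genuine computation, and while the machinery of \cite{GaS} and the \emph{Macaulay~2} implementation make it mechanical for any fixed $k$, extracting a uniform closed-form answer $k-1$ for all $k$ requires care with the generators $\beta_0 = (0,1), \beta_1 = (0,z), \dots$ and their $S$-module relations, and with the contribution of the $a$-coefficients coupled through $f = z^k a + zu\cdot b$. Granting this, $\chi^{\loc}(E) = \mathbf{w}(E) + \mathbf{h}(E) = (k-1) + 0 = k-1$, matching the lower bound, so $\chi^{\mathrm{min}}_k = k-1$.

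For the dimension statement, I would compare with the global moduli count \eqref{localmoduli}, $\dim\mathcal{M}_j(k) = 2j-k-2$ for $j > k$, but note this does not directly apply at $j = k$; instead I would argue intrinsically. By \cite{Ga2} every bundle with $c_1 = 0$ and splitting type $k$ is an extension $0 \to \mathcal{O}(-k) \to E \to \mathcal{O}(k) \to 0$, classified up to the action of bundle automorphisms by a stratum of $\mathbb{P}\,\Ext^1(\mathcal{O}_{Z_k}(k), \mathcal{O}_{Z_k}(-k))$. From \eqref{extclass} with $j = k$ the extension class is $p(z,u) = p_{11}\,zu$, a single parameter; so $\Ext^1$ is one-dimensional and its projectivization is a point --- but that records only the framed or rigidified picture. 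The \emph{unframed} local moduli space of instantons of local charge $k-1$ is cut out by additionally quotienting by the framing at infinity (equivalently, by the residual gauge freedom), and a dimension bookkeeping --- matching the framed count against the group of framings over $Z_k^o$, or alternatively specializing the formula in \cite[Theorem 4.2]{BG2} to the boundary value $j = k$, which yields $2k - k - 2 = k-2$ --- gives that this moduli space has dimension $k-2$. I would close by remarking that for $k = 2$ this is $0$-dimensional (a single instanton), consistent with part~(1) of the Example, and for $k \geq 3$ it is a genuine positive-dimensional family, which is the content worth emphasizing.
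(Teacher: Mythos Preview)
Your overall architecture mirrors the paper's: reduce via Corollary~\ref{inst} to splitting type $j=k$, invoke Proposition~\ref{prop.mins} for the lower bound, exhibit the bundle with extension class $p=zu$ to realise $\chi^{\loc}=k-1$, and read off the moduli dimension by setting $j=k$ in \eqref{localmoduli}. That part is fine and matches the paper.

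The genuine problem is your height/width decomposition for the example, which is exactly backwards. You compute $\mathbf{h}(E)=0$ from \eqref{eq.height} via $\mu=\min(1,\lfloor(k-2)/k\rfloor)=0$, and then you \emph{hope} the deferred width computation will produce $\mathbf{w}(E)=k-1$. But the paper carries out that very width computation explicitly: with $T=\bigl(\begin{smallmatrix} z^k & zu\\ 0 & z^{-k}\end{smallmatrix}\bigr)$ one presents $M=\pi_*E^\wedge_x$ by generators $\alpha=\bigl(\begin{smallmatrix}u\\0\end{smallmatrix}\bigr)$, $\beta_i=\bigl(\begin{smallmatrix}0\\z^i\end{smallmatrix}\bigr)$ for $0\le i\le k-1$, $\beta_k=\bigl(\begin{smallmatrix}-zu\\z^k\end{smallmatrix}\bigr)$, with the evident $S$-module relations, and checks that the evaluation map $M\to M^{\vee\vee}$ is \emph{surjective}. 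Hence $\mathbf{w}(E)=0$, not $k-1$. If you actually ran the algorithm of Section~\ref{sec.compwidth} as you propose, you would obtain $\mathbf{w}(E)=0$, and combined with your $\mathbf{h}(E)=0$ you would get $\chi^{\loc}(E)=0$, flatly contradicting the lower bound from Proposition~\ref{prop.mins} that you yourself invoked. So the plan, as written, collapses at precisely the step you flagged as ``the main obstacle''.

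What is really going on is that the entire local charge for this bundle sits in the height: the paper asserts $\mathbf{h}(E)=k-1$ and $\mathbf{w}(E)=0$. Your literal reading of \eqref{eq.height} gives $\mu=0$; note however that the proof of Proposition~\ref{prop.mins} itself bounds $\chi$ from below by the height and then applies \eqref{eq.height}, so the height for splitting type $k$ must be at least $k-1$, not $0$ --- this should already have signalled that either your evaluation of $\mu$ or the printed formula needs revisiting (the bound $\lfloor(2j-2)/k\rfloor$ from \eqref{extclass}, which gives $1$ at $j=k$, is the natural candidate). In any case, the repair is to swap your two invariants: show $\mathbf{h}(E)=k-1$ directly (or accept the paper's citation of \eqref{eq.height}), and then do the width computation, which is short and yields zero. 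Your moduli-dimension discussion is somewhat tangled --- the unframed moduli is not obtained from the framed one by a further quotient --- but since you eventually fall back on plugging $j=k$ into \eqref{localmoduli}, which is exactly what the paper does, that part survives.
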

\begin{proof}
By Corollary \ref{inst}, a nontrivial instanton bundle over $Z_k$ has
splitting type $kn$ for some $n \in \mathbb{Z}$, $n>0$. Hence, the
smallest nontrivial splitting type is exactly $k$,
and the generic such instanton corresponds to an
element of $\Ext^1_{Z_k}\bigl(\mathcal{O}(k), \mathcal{O}(-k)\bigr)$
which is nontrivial on the first formal neighbourhood.

The dimension of the local moduli space with fixed
$\chi^\mathrm{min}_k$ is obtained from formula \eqref{localmoduli}
setting $j = k$.  We compute $\chi$ for the bundle $E$ corresponding
to the extension class $[zu] \in \Ext^1\bigl(\mathcal{O}(k),
\mathcal{O}(-k)\bigr)$.

By Equation \eqref{eq.height}, $\mathbf{h}(E) = k - 1$.


To compute $\mathbf{w}(E) = h^0\bigl(X, (\pi_*E)^{\vee\vee} \bigl/
\pi_*E\bigr)$, we use the method described in Section
\ref{sec.compwidth}: Let $Q$ be the skyscraper sheaf defined by the
exact sequence
\[ 0 \longrightarrow \pi_*E \longrightarrow \bigl(\pi_*E\bigr)^{\vee\vee} \longrightarrow Q \longrightarrow 0 \text{ .} \]
Then $\mathbf{w}(E)$ equals the dimension of $Q_x^{\wedge}$ as a
$k_x$-module. So, we need to study the map
$\pi_*E^{\wedge}_x \to (\pi_*E_x^{\wedge})^{\vee\vee}$
and compute the dimension of the cokernel, i.e.\ we need to compute the module structure on $M \ce
\pi_*E_x^\wedge$ and study the natural map
$M \hookrightarrow M^{\vee\vee}$. By the Theorem on Formal Functions
(see \cite[p.~277]{Ha1}),
\[ M \cong \varprojlim_{n} H^0\bigl(\ell_n; E\rvert_{\ell_n}\bigr) \text{ ,} \]
where $\ell_n$ is the $n^\text{th}$ infinitesimal neighbourhood of
$\ell$. Since the extension class has degree $1$ in $u$, then for $n
\geq 1$,
\[ H^0\bigl(\ell_n; E\rvert_{\ell_n} \bigr) \cong H^0\bigl(\ell_1; E\rvert_{\ell_1} \bigr) \text{ .} \]
Therefore, the inverse limit stabilises at $1$, giving $M \cong
H^0\bigl(\ell_1; E\rvert_{\ell_1} \bigr)$.

To compute the generators of $M$, we write the transition matrix for
$E$ explicitly. We set $Z_k = U \cup V$, where $U \cong \mathbb{C}^2
\cong V$, with change of coordinates $U \ni (z,u) \mapsto (z^{-1} ,
z^ku)\in V $ on $U \cap V \cong \mathbb{C}-\{0\} \times
\mathbb{C}$. Then in these coordinate charts, $E$ is given
by transition matrix
\[ T = \begin{pmatrix} z^k & zu \\ 0 & z^{-k} \end{pmatrix} \text{ .} \]
Set $\alpha = \bigl(\begin{smallmatrix} u \\ 0 \end{smallmatrix}
\bigr)$ and $\beta_i = \bigl(\begin{smallmatrix} 0 \\ z^i
\end{smallmatrix} \bigr)$ for $i = 0,\dotsc, k-1$, $\beta_k = 
\bigl(\begin{smallmatrix} -zu \\ z^k \end{smallmatrix} \bigr)$. Then a
presentation for $M$ is given by $M=\bigl< \alpha, \beta_0, \dotsc,
\beta_k\bigr> \bigl/ R$, where $R$ is the set of relations $\beta_i
x_0 - \beta_{i-1} x_1 = 0$ for $i = 1, \dotsc, k-1$ and $\beta_k x_0 -
\beta_{ik-1} x_1 - \alpha x_1= 0$. Standard computations (which can be
performed either by hand, or with a computer algebra program) then
show that the evaluation map $\rho \colon M \to M^{\vee\vee}$ is
surjective. Hence $\mathbf{w}(E) = 0 $.

Summing up, $\chi^\mathrm{min}_k = \chi^{\loc}\bigl(E\bigr) =
\mathbf{w}(E) + \mathbf{h}(E) = 0 +(k-1)$.
\end{proof}

\begin{remark}[Gaps in  local instanton charges]
The non-existence of instantons with certain local charges on the
spaces $Z_k$ when $k>2$ is in stark contrast with what happens in the
case $k=1$. In fact, by \cite[Theorem 0.2]{BG1}, for every
non-negative integer $n$ there exist instantons on $Z_1$ with local
charge $n$. More precisely, by \cite[Theorem 0.2]{BG1}, for every pair
of integers $(w, h)$ satisfying $j - 1 - e \leq w \leq j(j-1)/2 - j\,
e$ and $1 \leq h \leq j(j+1)/2$ with $j \geq 0$ and $e \geq 0$ or
$-1$, there exists a rank-$2$ vector bundle $E$ on $Z_1$ with
splitting type $(j, -j + e)$ having  $\mathbf{w}(E) =
w$ and $\mathbf{h}(E) = h$. Hence, there are no gaps in local charges
for instantons over $Z_1$.
\end{remark}

\bigskip\bigskip\vfill

\noindent Elizabeth Gasparim, Thomas K\"{o}ppe \\
School of Mathematics, The University of Edinburgh \\
James Clerk Maxwell Building, The King's Buildings \\
Mayfield Road, Edinburgh, UK, EH9 3JZ \\
E-mail: \url{Elizabeth.Gasparim@ed.ac.uk} \\
E-mail: \url{t.koeppe@ed.ac.uk}

\bigskip

\noindent Pushan Majumdar\\
Institut f\"{u}r Theoretische Physik \\
Westf\"alische Wilhelms-Universit\"{a}t M\"{u}nster \\
48149 M\"{u}nster, Germany \\
E-mail: \url{pushan@uni-muenster.de}

\end{document}